\documentclass[11pt]{article}

%%%%%%%%%%%%%%%%%%%%%%%%%%%%%%%%%%%%%%%%%%%%%%%%%%%%%%%%%%%%%%%%%%%%%%%%%%%%%%%%%%%%%%%%%%%%%%
%%%%%%%%%%%%%%%%%%%%%%%%%%%%%%%%%%%%%%%%%%%%%%%%%%%%%%%%%%%%%%%%%%%%%%%%%%%%%%%%%%%%%%%%%%%%%%
%!TEX root = main.tex

\usepackage{microtype}
\usepackage{graphicx}
\usepackage{booktabs} 

\usepackage{hyperref}

\synctex=1

\usepackage{amsmath,amsbsy,amsgen,amscd,amssymb,amsthm,amsfonts,stmaryrd}
\usepackage{mathtools}

\usepackage{bm}

\usepackage{microtype}      

\usepackage{hyperref}      
\usepackage{url}            

\usepackage[usenames,dvipsnames]{xcolor}
\usepackage{graphicx}

\graphicspath{{art/}}

\definecolor{dark-gray}{gray}{0.3}
\definecolor{dkgray}{rgb}{.4,.4,.4}
\definecolor{dkblue}{rgb}{0,0,.5}
\definecolor{medblue}{rgb}{0,0,.75}
\definecolor{rust}{rgb}{0.5,0.1,0.1}

\hypersetup{urlcolor=rust}
\hypersetup{citecolor=dkblue}
\hypersetup{linkcolor=dkblue}

\theoremstyle{definition}

\numberwithin{equation}{section}

\renewcommand{\phi}{\varphi}

\DeclareFontFamily{OT1}{pzc}{}
\DeclareFontShape{OT1}{pzc}{m}{it}{<-> s * [1.200] pzcmi7t}{}
\DeclareMathAlphabet{\mathpzc}{OT1}{pzc}{m}{it}

\usepackage{natbib}
\usepackage{algorithm}
\usepackage[margin=1in]{geometry}

\renewcommand{\cite}[1]{\citep{#1}}

\usepackage[nameinlink]{cleveref}
\crefname{lemma}{lemma}{lemmas}

% Recommended, but optional, packages for figures and better typesetting:
\usepackage{graphicx}
\usepackage{subcaption}

\usepackage{amsmath}
\usepackage{amsthm}

\usepackage{algorithm}
\usepackage{algpseudocode}
\algnewcommand\algorithmicinput{\textbf{Input:}}
\algnewcommand\Input{\item[\algorithmicinput]}
\algdef{SE}[SUBALG]{Indent}{EndIndent}{}{\algorithmicend\ }%
\algtext*{Indent}
\algtext*{EndIndent}

\newtheorem{theorem}{Theorem}
\newtheorem{lemma}{Lemma}

\newtheorem{remark}{Remark}

% Repeat theorem numbers automatically
\makeatletter
\newcommand{\newreptheorem}[2]{\newtheorem*{rep@#1}{\rep@title}
	\newenvironment{rep#1}[1]{\def\rep@title{#2 \ref*{##1}}\begin{rep@#1}}{\end{rep@#1}}
}
\makeatother

\newreptheorem{lemma}{Lemma}
\newreptheorem{theorem}{Theorem}
\newreptheorem{claim}{Claim}
\newreptheorem{proposition}{Proposition}
\newreptheorem{corollary}{Corollary}

\newcommand{\bc}[1]{\left\{{#1}\right\}}
\newcommand{\br}[1]{\left({#1}\right)}

\newcommand{\ip}[2]{\left\langle{#1},{#2}\right\rangle}
\newcommand{\norm}[1]{\left\| {#1} \right\|}

\newcommand{\gradf}[1]{\nabla f \left( {#1} \right)}

\newcommand{\reals}{\mathbb{R}}
\newcommand{\K}{\mathcal{K}}

\newcommand{\xbar}[1]{\bar{x}_{{#1}}}
\newcommand{\ztilde}[1]{\tilde{z}_{{#1}}}

\newcommand{\tg}{\tilde{g}}
\newcommand{\tM}{\tilde{M}}
\newcommand{\teta}{\tilde{\eta}}

%%%%%%%%%%%%%%%%%%%%%%%%%%%%%%%%%%%%%%%%%%%%%%%%%%%%%%%%%%%%%%%%%%%%%%%%%%%%%%%%%%%%%%%%%%%%%%
%%%%%%%%%%%%%%%%%%%%%%%%%%%%%%%%%%%%%%%%%%%%%%%%%%%%%%%%%%%%%%%%%%%%%%%%%%%%%%%%%%%%%%%%%%%%%%

\setlength{\parskip}{1mm}

\title{UniXGrad: A Universal, Adaptive Algorithm with Optimal Guarantees for Constrained Optimization}

\author{%
  Ali Kavis\thanks{Equal contribution}\\
  EPFL\\
  % Address \\
  \texttt{ali.kavis@epfl.ch}\\
  \and
  Kfir Y. Levy\footnotemark[1]\\
  Technion\\
  % Address \\
  \texttt{kfirylevy@technion.ac.il}\\
  \\\and
  Francis Bach\\
  INRIA\\
  % Address \\
  \texttt{francis.bach@inria.fr}\\
  \and
  Volkan Cevher\\
  EPFL\\
  % Address \\
  \texttt{volkan.cevher@epfl.ch}\\
}

\begin{document}

\maketitle

%%%%%%%%%%%%%%%%%%%%%%%%%%%%%%%%%%%%%%%%%%%%%%%%%%%%%%%%%%%%%%%%%%%%%%%%%%%%%%%%%%%%%%%%%%%%%%
%%%%%%%%%%%%%%%%%%%%%%%%%%%%%%%%%%%%%%%%%%%%%%%%%%%%%%%%%%%%%%%%%%%%%%%%%%%%%%%%%%%%%%%%%%%%%%
%!TEX root = main.tex
%%%%%%%%%%%%%%%%%%%%%%%%%%%%%%%%%%%%%%%%%%%%%%%%%%%%%%%%%%%%%%%%%%%%%%%%%%%%%%%%%%%%%%%%%%%%%%
%%%%%%%%%%%%%%%%%%%%%%%%%%%%%%%%%%%%%%%%%%%%%%%%%%%%%%%%%%%%%%%%%%%%%%%%%%%%%%%%%%%%%%%%%%%%%%

\begin{abstract}
	We propose a novel adaptive, accelerated algorithm for the stochastic constrained convex optimization setting.
Our method, which is inspired by the Mirror-Prox method,  \emph{simultaneously} achieves the optimal rates for smooth/non-smooth problems with either deterministic/stochastic first-order oracles. This is done without any prior knowledge of the smoothness nor the noise properties of the problem.
% Our results directly apply to constrained problems with simple set constraints. We initially provide weighted regret bounds for our algorithm in the aforementioned settings, and we propose a novel, yet very clean, online to offline conversion scheme that yields $\mathcal O (\frac{1}{\sqrt{T}})$ rate for non-smooth objective, $\mathcal O (\frac{1}{T^2})$ and $\mathcal O (\frac{1}{T^2} + \frac{\sigma}{\sqrt{T}})$ for smooth objectives with deterministic and stochastic oracles, respectively, where $\sigma^2$ is the variance of unbiased gradients. 
% Our framework is universal with an adaptive learning rate such that it accomplishes the optimal convergence guarantees without any prior knowledge of smoothness, Lipschitz constant and variance of the oracles.
  To the best of our knowledge, this is the first adaptive, unified algorithm that achieves the optimal rates in the constrained setting. We demonstrate the practical performance of our framework through extensive numerical experiments.
\end{abstract}

%%%%%%%%%%%%%%%%%%%%%%%%%%%%%%%%%%%%%%%%%%%%%%%%%%%%%%%%%%%%%%%%%%%%%%%%%%%%%%%%%%%%%%%%%%%%%%
%%%%%%%%%%%%%%%%%%%%%%%%%%%%%%%%%%%%%%%%%%%%%%%%%%%%%%%%%%%%%%%%%%%%%%%%%%%%%%%%%%%%%%%%%%%%%%
%!TEX root = main.tex
%%%%%%%%%%%%%%%%%%%%%%%%%%%%%%%%%%%%%%%%%%%%%%%%%%%%%%%%%%%%%%%%%%%%%%%%%%%%%%%%%%%%%%%%%%%%%%
%%%%%%%%%%%%%%%%%%%%%%%%%%%%%%%%%%%%%%%%%%%%%%%%%%%%%%%%%%%%%%%%%%%%%%%%%%%%%%%%%%%%%%%%%%%%%%

\section{Introduction}
Stochastic constrained optimization with first-order oracles (SCO) is critical in machine learning. Indeed, the scalability of  classical machine learning tasks, such as support vector machines (SVMs), linear/logistic regression and Lasso, rely on efficient  \textit{stochastic} optimization methods. Importantly,  generalization guarantees for such tasks often rely on constraining  the set of possible solutions. The latter  induces simple solutions in the form of low norm or low entropy, which in 
trun enables to establish generalization guarantees.
%while the generalization guarantees of these tasks often depend on the bounds on their parameter \textit{constraints}. 

In the SCO setting, the optimal convergence rates for the cases of non-smooth and smooth objectives are given by $\mathcal{O}(GD/\sqrt{T})$ and $\mathcal{O}(LD^2/T^2 + \sigma D  /\sqrt{T})$, respectively; where $T$ is the total number of (noisy) gradient queries, $L$ is the smoothness constant of the objective,  $\sigma^2$ is the variance of the stochastic gradient estimates, $D$ is the effective diameter of the decision set, and $G$ is a bound on the magnitude of gradient estimates. These rates cannot be improved without additional assumptions.    

The optimal rate for the non-smooth case may be obtained by the current state-of-the-art  optimization algorithms, such as Stochastic Gradient Descent (SGD), AdaGrad \citep{duchi2011adaptive}, Adam \citep{kingma2014adam}, and AmsGrad \citep{j.2018on}. However, in order to obtain the optimal  rate for the smooth case, one is required to use more involved \emph{accelerated} methods such as \citep{hu2009accelerated,lan2012optimal,xiao2010dual,diakonikolas2017accelerated,cohen2018acceleration,deng2018optimal}.  

Unfortunately, all of these accelerated methods require  a-priori knowledge of the smoothness parameter $L$, as well as the variance of the gradients $\sigma^2$, creating a setup barrier for their use in practice. As a result,  accelerated methods are not very popular in machine learning tasks.

This work develops a new \emph{universal} method for SCO that obtains the optimal rates in both smooth and non-smooth cases, \textit{without any prior knowledge regarding the smoothness of the problem $L$, nor the noise magnitude $\sigma$}.  
Such universal methods that implicitly adapt to the properties of the learning objective may be very beneficial in practical large-scale problems where these properties are usually unknown. To our knowledge, this is the first work that achieves this desiderata in the constrained SCO setting.

\paragraph{Our contributions in the context of related work}

For the unconstrained setting, \citet{levy2018online} and \citet{cutkosky2019anytimeICML} have recently presented a universal scheme that  obtains (almost) optimal rates for both smooth and non-smooth cases. 

More specifically, \citet{levy2018online} designs AcceleGrad---a method that obtains respective rates of $\mathcal{O}\left(G D \sqrt{\log T}/ \sqrt{T}\right)$ and $\mathcal{O}\left(L \log{L} D^2 / T + \sigma D \sqrt{\log{T}}/\sqrt{T} \right)$. %, which is done without any prior knowledge of the smoothness nor the noise variance.
Unfortunately, this result only holds for the unconstrained  setting, and the authors leave the \textit{constrained} case as an open problem.

An important progress towards this open problem is achieved only recently by \citet{cutkosky2019anytimeICML}, who proves suboptimal  respective rates  of $\mathcal{O}\left(1/\sqrt{T}\right)$ and $\mathcal{O}\left(D^2 L/T^{3/2} + \sigma D /\sqrt{T}\right)$ for SCO in the constrained setting. %We note that the setting considered by \citet{cutkosky2019anytimeICML} is online optimization which appears more general than SCOFO. 

Our work completely resolves the open problem in \citet{levy2018online,cutkosky2019anytimeICML}, and proposes the first \textit{universal} method that obtains respective \emph{optimal} rates of  $ \mathcal{O}\left(G D /\sqrt{T}\right)$ and $ \mathcal{O} \left(D^2 L/T^{2} + \sigma D /\sqrt{T}\right)$ for the constrained setting. When applied to the unconstrained setting, our analysis tightens the rate characterizations by removing the unnecessary logarithmic factors appearing in \citep{levy2018online,cutkosky2019anytimeICML}.

Our method is inspired by the Mirror-Prox method \citep{nemirovski2004prox,rakhlin2013optimization,diakonikolas2017accelerated,bach2019universal}, and builds on top of it using additional techniques from the online learning literature.
Among, is an adaptive learning rate rule \citep{duchi2011adaptive,rakhlin2013optimization}, as well as recent  online-to-batch conversion techniques \citep{levy2017online,cutkosky2019anytimeICML}. 

The paper is organized as follows. In the next section, we specify the problem setup, and give the necessary definitions and background information. In Section~\ref{sec:algorithm}, we motivate our framework and explain the general mechanism. We also introduce the convergence theorems with proof sketches to highlight the technical novelties. We share numerical results in comparison with other adaptive methods and baselines for different machine learning tasks in Section~\ref{sec:experiments}, followed up with conclusions.

\if 0
Throughout the last two decades, the \emph{stochastic optimization} paradigm has revolutionized the field of  machine learning.
This paradigm has enabled a unified framework for disparate  learning tasks, 
%such as: support vector machines (SVMs), linear/logistic regression and Lasso,
and supplied powerful  tools for solving them. 
Concretely, classical machine learning tasks such as: support vector machines (SVMs), linear/logistic regression and Lasso, can be framed and analyzed through the lens of \emph{stochastic constrained convex optimization}. And we emphasize that the \emph{constraints} are often necessary in order to establish generalization bounds for such learning problems. 
Furthermore, the current state-of-the-art methods for training machine learning models, rely on methods like AdaGrad \citep{duchi2011adaptive} , Adam \citep{kingma2014adam}, and AmsGrad \citep{j.2018on}, which are theoretically grounded  in the stochastic constrained convex optimization  setting. 

%%Importance of first-order methods
%Advanced first order optimization algorithms have inspired the current state-of-the-art
%methods for training machine learning models.
%Adaptive algorithms like AdaGrad \cite{dvw},  Adam \cite{ddwv} and AmsGrad \cite{ccce}  adjust the learning rate throughout the learning process. And the latter is known to facilitate better performance which is crucial to handling  large-scale problems. 
%These adaptive methods are theoretically grounded  in the stochastic convex optimization a setting,
%where AdaGrad and AmsGrad come with provable guarantees.

In this work we focus on stochastic convex optimization.
In this setting, the optimal rates for the cases of non-smooth and smooth objectives are $O(1/\sqrt{T})$ and $O(L/T^2 + \sigma/\sqrt{T})$, respectively; where $T$ is the total number of (noisy) gradient queries, $L$ is the smoothness of the objective, and
 $\sigma^2$ is the variance of the  gradient estimates. 
 The optimal rate for the non-smooth case may be obtained by using SGD, as well as adaptive methods such as AdaGrad and AmsGrad. In order to obtain the optimal  rate for the smooth case, one is required to use more involved \emph{accelerated} methods such as \citep{hu2009accelerated,lan2012optimal,xiao2010dual,diakonikolas2017accelerated,cohen2018acceleration,deng2018optimal}. Unfortunately, all of these methods require  a-priori knowledge of the smoothness parameter $L$, as well as the variance of the gradients $\sigma^2$. Due to these reasons, such accelerated methods are not very popular in machine learning.

Our goal here is to design \emph{universal} methods for the important constrained setting. By  \emph{universal} we mean  a method that obtains the optimal rates in both smooth and non-smooth cases, without any prior knowledge regarding the smoothness of the problem $L$, nor the noise magnitude $\sigma$.
Such universal methods that implicitly adapt to the properties of the learning objective may be very beneficial in practical large-scale problems where these properties are usually unknown. Unfortunately, there does not exist a universal method for the constrained setting.

%does not require to know in advance whether the objective is smooth/non-smooth and whether the problem 
%Unfortunately, in the important constrained setting there does not  exist any method which is \emph{universal}. By universal we mean  
%meaning they do not simultaneously obtain the optimal rates for the smooth and non-smooth cases, but rather require to know the specific case (i.e., smooth/non-smooth) in advance. 
%Moreover,   methods that obtain the optimal rate for the smooth case (we refer to such methods as ``accelerated")  require  a-priori knowledge of the smoothness parameter $L$, as well as the variance of the gradients $\sigma$, \cite{fhrhrh}.
% Due to these reasons, such accelerated methods are not very popular in machine learning.
% We refer the reader to Table~\ref{table:Comparis} where we elaborate on the differences between the different methods.

For the unconstrained setting, \citet{levy2018online} and \citet{cutkosky2019anytimeICML} have recently presented a universal scheme that  obtains (almost) optimal rates for both smooth and non-smooth cases. Concretely, \citet{levy2018online} designed AcceleGrad, a method that obtains respective rates of $O\left(\sqrt{\log T/T}\right)$ and $O\left(L\log L/T^2 + \sigma\sqrt{\log T/T}\right)$. %, which is done without any prior knowledge of the smoothness nor the noise variance.
Unfortunately, their result only holds for the unconstrained  setting, and they leave the constrained case open. 
An important step to address this gap was done by \citet{cutkosky2019anytimeICML}, who have designed a
universal method that obtains respective sub-optimal rates of $O(1/\sqrt{T})$ and $O(L/T^{3/2} + \sigma/\sqrt{T})$ for the constrained setting.

In this work we completely resolve this gap, and present the first universal method that obtains respective \emph{optimal} rates of  $O(1/\sqrt{T})$ and $O(L/T^{2} + \sigma/\sqrt{T})$ for the constrained setting. Our method can also be used in the unconstrained setting, where we shave off the unecesarry logarithmic factors appearing in \citep{levy2018online,cutkosky2019anytimeICML}. Our new method combines several different techniques, among are the use of the Mirror-Prox method as a baseline algorithm \citep{nemirovski2004prox,rakhlin2013optimization,diakonikolas2017accelerated}, an adaptive learning rate rule technique \citep{duchi2011adaptive,rakhlin2013optimization}, as well as recent online to batch conversions techniques \citep{levy2017online,cutkosky2019anytimeICML}.

The remainder of the paper is organized as follows. In the following section, we specify the problem setup, and give necessary definitions and background information. In Section~\ref{sec:algorithm}, we motivate our framework and explain the general mechanism. We also introduce the convergence theorems with proof sketches to highlight the technical novelties. We share numerical results in comparison with other adaptive methods and baselines for different machine learning tasks in Section~\ref{sec:experiments}. Finally, we wrap up with concluding remarks and possible future directions.

\fi

%%%%%%%%%%%%%%%%%%%%%%%%%%%%%%%%%%%%%%%%%%%%%%%%%%%%%%%%%%%%%%%%%%%%%%%%%%%%%%%%%%%%%%%%%%%%%%
%%%%%%%%%%%%%%%%%%%%%%%%%%%%%%%%%%%%%%%%%%%%%%%%%%%%%%%%%%%%%%%%%%%%%%%%%%%%%%%%%%%%%%%%%%%%%%
%!TEX root = main.tex
%%%%%%%%%%%%%%%%%%%%%%%%%%%%%%%%%%%%%%%%%%%%%%%%%%%%%%%%%%%%%%%%%%%%%%%%%%%%%%%%%%%%%%%%%%%%%%
%%%%%%%%%%%%%%%%%%%%%%%%%%%%%%%%%%%%%%%%%%%%%%%%%%%%%%%%%%%%%%%%%%%%%%%%%%%%%%%%%%%%%%%%%%%%%%

\section{Setting and preliminaries} \label{sec:prelim}

\paragraph{Preliminaries.}
 Let $\|\cdot\|$ be a general norm and $\|\cdot\|_*$ be its dual norm. 
A  function $f:\K\mapsto\reals$ is \emph{$\mu$-strongly convex} over a convex set $\K$, if for any $x\in\K$ and 
 any $\nabla f(x)$, a subgradient of $f$ at~$x$, 
\begin{align} \label{prelim:str-cvx}
	f(x) - f(y) - \ip{\nabla f (y)}{x - y} \geq \frac{\mu}{2} \norm{x - y}^2, \quad \forall x,y \in \mathcal K
\end{align}
%for any choice $\nabla f(x)$ of subgradient of $f$ at $x$.
A  function $f:\K\mapsto\reals$ is \emph{$L$-smooth} over  $\K$ if it has $L$-Lipschitz continuous gradient, i.e.,
\vspace{-3mm}

\begin{align} \label{prelim:L-smooth}
	\norm{\nabla f(x) - \nabla f(y)}_\ast \leq L \norm{x - y}, \quad \forall x,y \in \mathcal K.
\end{align}

Consider a $1$-strongly convex differentiable function $\mathcal R: \mathcal K \rightarrow \mathbb R$. The Bregman divergence with respect to a distance-generating function $\mathcal R$ is defined as follows $\forall x,y\in\K$,
%\vspace{-3mm}
\begin{align}
	D_{\mathcal R} (x, y) = \mathcal R(x) - \mathcal R(y) - \ip{\nabla \mathcal R (y)}{x - y}~.
\end{align}
An important property of Bregman divergence is that $D_{\mathcal R} (x, y) \geq \frac{1}{2} \norm{x - y}^2$ for all $x,y\in\K$, due to the strong convexity of $\mathcal R$.

\paragraph{Setting}
This paper focuses on (approximately) solving the following  constrained  problem,
%\vspace{-3mm}
\begin{align} \label{problem-def}
	\min\limits_{x \in \mathcal K} f(x)~,
\end{align}
where $f:\K\mapsto \reals$ is a convex function, and $\K \subset \reals^d$ is a compact  convex set. 

We assume the availability of a first order oracle for $f(\cdot)$, and consider two settings: a deterministic setting where we may access  exact gradients, and a stochastic setting where we may only access unbiased (noisy) gradient estimates. Concretely, we assume that by querying this oracle with a point $x\in \K$, we receive $\tilde{\nabla} f(x)\in\reals^d$ such,
\begin{align} \label{def:stochastic-gradient}
	\mathbb E \left[ \tilde{\nabla} f (x) \big| x \right] = \nabla f(x)~.
\end{align} 
Throughout this paper we also assume the norm of the (sub)-gradient estimates is bounded by $G$, i.e,
$$
\|\tilde{\nabla}f(x) \|_\ast \leq G,\qquad \forall x\in\K~.
$$

\section{The algorithm} \label{sec:algorithm}

In this section, we present and analyze our \textbf{Uni}versal e\textbf{X}tra \textbf{Grad}ient (UniXGrad) method. We first discuss the Mirror-Prox (MP) algorithm of \citep{nemirovski2004prox}, and the  related Optimistic Mirror Descent (OMD) algorithm of \citep{rakhlin2013optimization}. Later we present our algorithm which builds on top of the Optimistic Mirror Descent (OMD) scheme. Then in Sections~\ref{sec:nonsmooth-case}  and \ref{sec:smooth-case}, we present and analyze the guarantees of our method in nonsmooth and smooth settings, respectively.

%
%\paragraph{Online Optimistic Mirror Prox}
%In the context of the online learning setting \citep{rakhlin2013optimization} introduce the template appearing in Algorithm~\ref{alg:template}. Assuming that $\{g_t\}^_{t=1}^T$ is a set of linear 
%\newpage

Our goal is to optimize a convex function $f$ over a compact domain $\mathcal K$, and Algorithm~\ref{alg:template} offers a framework for solving this template, which is inspired by the Mirror-Prox (MP) algorithm of \citep{nemirovski2004prox} and the Optimistic Mirror Descent (OMD) algorithm of \citep{rakhlin2013optimization}. Let us motivate this particular template. Basically, the algorithm takes a step from $y_{t-1}$ to $x_t$, using first order information based on $y_{t-1}$. Then, it goes back to $y_{t-1}$ and takes another step, but this time, gradient information relies on $x_t$. Each step is a generalized projection with respect to Bregman divergence $D_{\mathcal R}(\cdot, \cdot)$.

\begin{algorithm}[H]
\caption{Mirror-Prox Template} \label{alg:template}
\begin{algorithmic}[1]
\Input{ Number of iterations $T$ , $y_0 \in \mathcal K$, learning rate $\bc{\eta_t}_{t \in [T]}$}
\For{$t = 1, ..., T$}
	\Indent
		\State $x_t = \arg \min\limits_{x \in \mathcal K} \ip{x}{M_t} + \frac{1}{\eta_t}D_{\mathcal R}(x, y_{t-1})$
		\State $y_t = \arg \min\limits_{y \in \mathcal K} \ip{y}{g_t} + \frac{1}{\eta_t}D_{\mathcal R}(y, y_{t-1})$
	\EndIndent
\EndFor
\end{algorithmic}
\end{algorithm}
\vspace{-2mm}
Now, let us explain the salient differences between UniXGrad and MP as well as OMD using the particular choices of $M_t$, $g_t$ and the distance-generating function $\mathcal R$.

Optimistic Mirror Descent takes $g_t = \nabla f(x_t)$ and computes $M_t = \nabla f(x_{t-1})$, i.e., based on gradient information from previous iterates. This vector is available at the beginning of each iteration and the ``optimism'' arises in the case where $M_t \approx g_{t}$. When $M_t = \nabla f (y_{t-1})$ and $g_t = \nabla f(x_t)$, the template is known as the famous Mirror-Prox algorithm. One special case of Mirror-Prox is Extra-Gradient scheme \citep{korpelevich1976extragradient} where the projections are with respect to Euclidean norm, i.e. $\mathcal R (x) = 1 / 2 \norm{x}_2^2$, instead of general Bregman divergences.

MP has been well-studied, especially in the context of variational inequalities and convex-concave saddle point problems. It achieves fast convergence rate of $\mathcal O (1 / T)$ for this class of problems, however, in the context of smooth convex optimization, this is the standard slow rate \citep{nesterov2003introductory}. To date, MP is not known to enjoy the accelerated rate of $\mathcal O (1 / T^2)$ for smooth convex minimization.

We propose three modifications to this template, which are the precise choice of $g_t$ and $M_t$, the adaptive learning rate and the gradient weighting scheme.
\vspace{-2mm}
\paragraph{The notion of averaging:} In different interpretations of acceleration \citep{nesterov1983acceleration, tseng2008accelerated, orecchia2014coupling}, the notion of averaging is always central and we incorporate this notion via gradients taken at weighted average of iterates. Let us define the weight $\alpha_t = t$ and the following quantities

\vspace{-4mm}
\begin{align} \label{prelim:avg-iterates}
	\xbar{t} = \frac{\alpha_t x_t + \sum_{i=1}^{t-1} \alpha_i x_i }{\sum_{i=1}^{t} \alpha_i},  \quad\quad\quad  \ztilde{t} = \frac{ \alpha_t y_{t-1} + \sum_{i=1}^{t-1} \alpha_i x_i }{\sum_{i=1}^{t} \alpha_i }.
\end{align}
\vspace{-3mm}

Then, UniXGrad algorithm takes $g_t = \nabla f(\bar{x}_t)$ and $M_t = \nabla f(\tilde{z}_t)$, which provides a naive interpretation of averaging. Our choice of $g_t$ and $M_t$ coincide with that of the accelerated Extra-Gradient scheme of \citet{diakonikolas2017accelerated}. While their decision relies on implicit Euler discretization of an accelerated dynamics, we arrive at the same conclusion as a direct consequence of our convergence analysis.
\vspace{-2mm}
\paragraph{Adaptive learning rate:} A key ingredients of our algorithm is the choice of adaptive learning rate $\eta_t$. In light of \cite{rakhlin2013optimization}, we define our lag-one-behind learning rate as

\vspace{-5mm}
\begin{align} \label{prelim:learning-rate}
	\eta_t = \frac{2 D}{\sqrt{ 1 + \sum\limits_{i=1}^{t-1} \alpha_i^2 \norm{g_i - M_i}_\ast^2} },
\end{align}

where $D^2 = \sup_{x, y \in \mathcal K} D_{\mathcal R}(x,y)$ is the diameter of the compact set $\mathcal K$ with respect to Bregman divergences.
%We call it ``lag-one-behind'' as $g_t$ and $M_t$ are available together only after observing $x_t$, hence $\norm{g_t - M_t}^2_\ast$ is not computable until next iteration. 
Algorithm~\ref{alg:UniXGrad} summarizes our framework.

\vspace{-2mm}
\paragraph{Gradient weighting scheme:} We introduce the weights $\alpha_t$ in the sequence updates. One can interpret this as separating step size into learning rate and the scaling factors. It is necessary that $\alpha_t = \Theta(t)$ in order to achieve optimal rates, in fact we precisely choose $\alpha_t = t$. Also notice that they appear in the learning rate, compatible with the update rule.

\begin{algorithm}[H]
\caption{UniXGrad} \label{alg:UniXGrad}
\begin{algorithmic}[1]
\Input{ \# of iterations $T$ , $y_0 \in K$, diameter $D$, weight $\alpha_t = t$, learning rate $\bc{\eta_t}_{t \in [T]}$}
\For{$t = 1, ..., T$}
	\Indent
		\State $x_t = \arg \min\limits_{x \in \mathcal K} \alpha_t \ip{x}{M_t} + \frac{1}{\eta_t}D_{\mathcal R}(x, y_{t-1}) \quad \quad(M_t = \gradf{ \ztilde{t} })$
		\State $y_t = \arg \min\limits_{y \in \mathcal K} \alpha_t \ip{y}{g_t} + \frac{1}{\eta_t}D_{\mathcal R}(y, y_{t-1}) \quad \quad (g_t = \gradf{ \xbar{t} })$
	\EndIndent
\EndFor
\State \textbf{return} $\xbar{T}$
\end{algorithmic}
\end{algorithm}
\vspace{-2mm}
%Before proceeding with the analysis of the algorithm in smooth/non-smooth and deterministic/stochastic settings, we would like to provide two key technicalities that are required for proving our main results.
%
%\begin{lemma} \label{lem:technical1}
%	Let $\bc{a_i}_{i=1, ..., n}$ be a sequence of non negative numbers. Then, it holds that
%	
%	\[
%		\sqrt{ \sum_{i=1}^{n} a_i } \leq \sum_{i=1}^{n} \frac{ a_i }{\sum_{j=1}^{i} a_j } \leq 2 \sqrt{ \sum_{i=1}^{n} a_i }.
%	\]
%	
%\end{lemma}
%
%Please refer to \cite{levy2018accelegrad} for the proof of the Lemma~\ref{lem:technical1}. In addition, we will make use of the following inequality due to \cite{rakhlin2013omd}, which is slightly modified to agree with our weighted regret analysis.
%
%\begin{align} \label{def:gen-youngs-ineq}
%	\alpha_t \norm{g_t - M_t}_\ast \norm{x_t - y_t} = \inf_{\rho > 0} \bc{ \frac{\rho}{2} \alpha_t^2 \norm{g_t - M_t}_{\ast}^2 + \frac{1}{2 \rho} \norm{x_t - y_t}^2 }.
%\end{align}

In the remainder of this section, we will present our convergence theorems and provide proof sketches to emphasize the fundamental aspects and novelties. With the purpose of simplifying the analysis, we borrow classical tools in the online learning literature and perform the convergence analysis in the sense of bounding ``weighted regret''. Then, we use a simple yet essential conversion strategy which enables us to \emph{directly} translate our weighted regret bounds to convergence rates. Before we proceed, we will present  the conversion scheme from weighted regret to convergence rate, by deferring the proof to Appendix. In a concurrent work, \citep{cutkosky2019anytimeICML} proves a similar online-to-offline conversion bound.

\begin{lemma} \label{lem:regret-to-rate}
	Consider weighted average $\bar{x}_t$ as in Eq.~(\ref{prelim:avg-iterates}). Let $R_T(x_\ast) = \sum_{t=1}^{T} \alpha_t \ip{x_t - x_\ast}{g_t} $ denote the weighted regret after T iterations, $\alpha_t = t$ and $g_t = \nabla f(\bar{x}_t)$. Then,
	\[
		f(\bar{x}_T) - f(x_\ast) \leq \frac{2 R_T(x_\ast)}{T^2}.
	\]
	
\end{lemma}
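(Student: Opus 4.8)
The plan is to obtain the bound by a telescoping argument driven by the defining recursion of the weighted average together with two applications of the (sub)gradient inequality for $f$. Write $A_t = \sum_{i=1}^{t}\alpha_i$ with the convention $A_0 = 0$, so that Eq.~(\ref{prelim:avg-iterates}) reads $A_t \bar{x}_t = A_{t-1}\bar{x}_{t-1} + \alpha_t x_t$, equivalently $\alpha_t x_t = A_t \bar{x}_t - A_{t-1}\bar{x}_{t-1}$. Since $\alpha_t = A_t - A_{t-1}$, subtracting $\alpha_t x_\ast$ gives the key identity $\alpha_t(x_t - x_\ast) = A_t(\bar{x}_t - x_\ast) - A_{t-1}(\bar{x}_{t-1} - x_\ast)$, hence
\begin{align*}
\alpha_t \ip{x_t - x_\ast}{g_t} = A_t \ip{\bar{x}_t - x_\ast}{g_t} - A_{t-1}\ip{\bar{x}_{t-1} - x_\ast}{g_t}.
\end{align*}

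Next I would estimate the two inner products using convexity and the fact that $g_t = \nabla f(\bar{x}_t)$. The subgradient inequality at $\bar{x}_t$ gives $\ip{\bar{x}_t - x_\ast}{g_t} \ge f(\bar{x}_t) - f(x_\ast)$, which handles the first term. For the second, I would split $\bar{x}_{t-1} - x_\ast = (\bar{x}_{t-1} - \bar{x}_t) + (\bar{x}_t - x_\ast)$ and apply convexity once more, now in the ``cross'' direction: $\ip{\bar{x}_{t-1} - \bar{x}_t}{g_t} \le f(\bar{x}_{t-1}) - f(\bar{x}_t)$. Writing $\delta_t := f(\bar{x}_t) - f(x_\ast)$ and plugging these in, the $A_t$ and $A_{t-1}$ contributions of the common term $\ip{\bar{x}_t - x_\ast}{g_t}$ recombine with coefficient $A_t - A_{t-1} = \alpha_t$, leaving
\[
\alpha_t \ip{x_t - x_\ast}{g_t} \;\ge\; \alpha_t \delta_t - A_{t-1}\big(\delta_{t-1} - \delta_t\big) \;=\; A_t \delta_t - A_{t-1}\delta_{t-1}.
\]

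Summing over $t = 1,\dots,T$, the right-hand side telescopes (using $A_0 = 0$) to $A_T \delta_T$, so $R_T(x_\ast) \ge A_T\big(f(\bar{x}_T) - f(x_\ast)\big)$. This step is fully general in the weights; specializing to $\alpha_t = t$ gives $A_T = T(T+1)/2 \ge T^2/2$. Since $x_\ast$ is a minimizer, $\delta_T \ge 0$ and therefore $R_T(x_\ast) \ge 0$, so dividing yields $f(\bar{x}_T) - f(x_\ast) \le R_T(x_\ast)/A_T \le 2 R_T(x_\ast)/T^2$, as claimed.

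The one delicate point is the treatment of the $\bar{x}_{t-1}$ term: because $g_t$ is the gradient at $\bar{x}_t$ and not at $\bar{x}_{t-1}$, it cannot be bounded directly by a function-value gap, and it is precisely the cross convexity inequality $\ip{\bar{x}_{t-1} - \bar{x}_t}{g_t} \le f(\bar{x}_{t-1}) - f(\bar{x}_t)$ that makes the per-step inequality telescope cleanly. Everything else is bookkeeping with the partial sums $A_t$, and the choice $\alpha_t = t$ enters only at the last step to turn the generic estimate $R_T \ge A_T\,\delta_T$ into the stated $\mathcal{O}(1/T^2)$ conversion factor.
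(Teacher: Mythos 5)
Your proof is correct and follows essentially the same route as the paper's: the identity $\alpha_t x_t = A_t\bar{x}_t - A_{t-1}\bar{x}_{t-1}$, two applications of the gradient inequality (at $x_\ast$ and in the cross direction $\ip{\bar{x}_{t-1}-\bar{x}_t}{\nabla f(\bar{x}_t)} \le f(\bar{x}_{t-1})-f(\bar{x}_t)$), and a telescoping sum yielding $R_T(x_\ast) \ge A_T\bigl(f(\bar{x}_T)-f(x_\ast)\bigr)$ with $A_T \ge T^2/2$. Your per-step formulation $A_t\delta_t - A_{t-1}\delta_{t-1}$ is just a cleaner packaging of the paper's aggregate telescoping, and you correctly note the (implicitly used) nonnegativity of $R_T(x_\ast)$ needed in the final division.
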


\subsection{Non-smooth setting} \label{sec:nonsmooth-case}
\vspace{-2mm}
\paragraph{Deterministic setting:}First, we will focus on the convergence analysis in the case of non-smooth objective functions with deterministic/stochastic first-order oracles. We will follow the regret analysis as in \cite{rakhlin2013optimization} with essential adjustments that suit our weighted scheme and particular choice of adaptive learning rate.

\begin{remark}
	It is important to point out that we do not completely exploit the precise definitions of $g_t$ and $M_t$ in the presence of non-smooth objectives. As far as the regret analysis is concerned, it suffices that these quantities are functions of $\nabla f(\cdot)$ and that, as a corollary, their dual norm is upper bounded. However, in order to bridge the gap between weighted regret and the objective sub-optimality, i.e. $f(\bar{x}_T) - f(x_\ast)$, we require $g_t = \nabla f(\bar{x}_t)$.
\end{remark}

Now, we can exhibit our convergence bounds for the case of deterministic oracles.

\begin{theorem} \label{thm:rate-nonsmooth-deterministic}
	Consider the constrained optimization setting in Problem~(\ref{problem-def}), where $f: \mathcal K \rightarrow \mathbb R$ is a proper, convex and $G$-Lipschitz function defined over compact, convex set $\mathcal K$. Let $x^\ast \in \min_{x \in \mathcal K} f(x)$. Then, Algorithm~\ref{alg:UniXGrad} guarantees
	
	\vspace{-4mm}
	\begin{align}
		f( \bar{x}_T ) - \min_{x \in \mathcal K} f(x) \leq \frac{7 D \sqrt{ 1 + \sum_{t=1}^{T} \alpha_t^2 \norm{ g_t - M_t }_\ast^2 } - D}{T^2} \leq\frac{6 D}{ T^2} + \frac{14G D}{\sqrt{T}}.
	\end{align}
	\vspace{-2mm}
\end{theorem}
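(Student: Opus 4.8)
The plan is to bound the weighted regret $R_T(x^\ast) = \sum_{t=1}^T \alpha_t \ip{x_t - x^\ast}{g_t}$ and then invoke Lemma~\ref{lem:regret-to-rate}. The starting point is the standard Optimistic Mirror Descent / Mirror-Prox regret inequality adapted to the weighted updates of Algorithm~\ref{alg:UniXGrad}. Concretely, from the optimality conditions of the two $\arg\min$ steps one derives, for each $t$ and any $x^\ast \in \mathcal K$, a per-step bound of the form
\begin{align}
\alpha_t \ip{x_t - x^\ast}{g_t} \leq \frac{1}{\eta_t}\Big( D_{\mathcal R}(x^\ast, y_{t-1}) - D_{\mathcal R}(x^\ast, y_t) \Big) + \alpha_t \ip{x_t - y_t}{g_t - M_t} - \frac{1}{\eta_t}\Big( D_{\mathcal R}(y_t, x_t) + D_{\mathcal R}(x_t, y_{t-1}) \Big). \nonumber
\end{align}
First I would establish this inequality carefully (this is the weighted analogue of Lemma~1 in \citet{rakhlin2013optimization}). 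Then I would apply Cauchy–Schwarz to the cross term, $\alpha_t \ip{x_t - y_t}{g_t - M_t} \leq \alpha_t \norm{x_t - y_t}\,\norm{g_t - M_t}_\ast$, use Young's inequality to split it as $\leq \frac{\eta_t}{2}\alpha_t^2\norm{g_t - M_t}_\ast^2 + \frac{1}{2\eta_t}\norm{x_t - y_t}^2$, and absorb the $\norm{x_t-y_t}^2/(2\eta_t)$ term into the negative Bregman terms using $D_{\mathcal R}(y_t,x_t) \geq \frac12\norm{x_t-y_t}^2$. This leaves a clean per-step bound: $\alpha_t\ip{x_t-x^\ast}{g_t} \leq \frac{1}{\eta_t}(D_{\mathcal R}(x^\ast,y_{t-1}) - D_{\mathcal R}(x^\ast,y_t)) + \frac{\eta_t}{2}\alpha_t^2\norm{g_t-M_t}_\ast^2$.

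Next I would sum over $t=1,\dots,T$. For the telescoping-like first term, because $\eta_t$ is nonincreasing one writes $\sum_t \frac{1}{\eta_t}(D_{\mathcal R}(x^\ast,y_{t-1}) - D_{\mathcal R}(x^\ast,y_t)) \leq \frac{D^2}{\eta_T}$ after Abel summation and using $D_{\mathcal R}(x^\ast,y_t)\le D^2$ (this uses $\eta_1^{-1}D^2 + \sum_{t\ge 2}(\eta_t^{-1}-\eta_{t-1}^{-1})D^2 = \eta_T^{-1}D^2$). For the second term I would use the classical adaptive-learning-rate "self-bounding" argument: with $\eta_t = 2D/\sqrt{1+\sum_{i<t}\alpha_i^2\norm{g_i-M_i}_\ast^2}$, the sum $\sum_t \frac{\eta_t}{2}\alpha_t^2\norm{g_t-M_t}_\ast^2$ is bounded by $2D\sqrt{1+\sum_{t=1}^T \alpha_t^2\norm{g_t-M_t}_\ast^2}$ via the standard inequality $\sum_t a_t/\sqrt{1+\sum_{i<t}a_i} \leq 2\sqrt{1+\sum_{t}a_t}$ (with $a_t = \alpha_t^2\norm{g_t-M_t}_\ast^2$; here one must be slightly careful because the learning rate is "lag-one-behind," which is exactly why the $+1$ and the argument give the clean constant). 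Combining, $R_T(x^\ast) \leq \frac{D^2}{\eta_T} + 2D\sqrt{1+\sum_t\alpha_t^2\norm{g_t-M_t}_\ast^2}$, and since $D^2/\eta_T = \frac{D}{2}\sqrt{1+\sum_{t<T}\alpha_t^2\norm{g_t-M_t}_\ast^2} \leq \frac{D}{2}\sqrt{1+\sum_{t\le T}\cdots}$, I get $R_T(x^\ast)\leq \frac{5D}{2}\sqrt{1+\sum_{t=1}^T\alpha_t^2\norm{g_t-M_t}_\ast^2}$; tightening constants (or keeping some slack, e.g. from not fully absorbing the $-D_{\mathcal R}(x_t,y_{t-1})$ term) yields the stated $\frac{7D}{2}\sqrt{\cdots} - \frac{D}{2}$. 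Then Lemma~\ref{lem:regret-to-rate} gives $f(\bar x_T) - f(x^\ast) \leq 2R_T(x^\ast)/T^2 \leq (7D\sqrt{1+\sum_t\alpha_t^2\norm{g_t-M_t}_\ast^2} - D)/T^2$, the first claimed inequality.

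For the second (explicit) inequality I would plug in the crude bounds: $\norm{g_t-M_t}_\ast \leq \norm{g_t}_\ast + \norm{M_t}_\ast \leq 2G$ since both $g_t$ and $M_t$ are gradients of $f$ evaluated at points of $\mathcal K$ and hence have dual norm at most $G$; and $\alpha_t = t$, so $\sum_{t=1}^T \alpha_t^2\norm{g_t-M_t}_\ast^2 \leq 4G^2\sum_{t=1}^T t^2 \leq \frac{4G^2}{3}(T+1)^3 \leq 4G^2 T^3$ (for $T\ge 1$, using a convenient loose bound). Then $\sqrt{1 + 4G^2 T^3} \leq 1 + 2G T^{3/2}$, so the bound becomes $\leq (7D(1+2GT^{3/2}) - D)/T^2 = 6D/T^2 + 14GD/\sqrt{T}$, as claimed. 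The main obstacle in all of this is getting the adaptive-learning-rate self-bounding step right with the lag-one-behind $\eta_t$ and tracking the constants so that they collapse exactly to the $7$ and $-1$ in the statement; the per-step OMD inequality and the final crude substitutions are routine, but the interplay between the $D^2/\eta_T$ term and the $\sqrt{\sum\alpha_t^2\norm{g_t-M_t}_\ast^2}$ term — and verifying that the negative $D_{\mathcal R}(x_t,y_{t-1})$ terms are genuinely not needed here (they will matter in the smooth case) — is where care is required.
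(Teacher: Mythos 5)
Your overall architecture (per-step OMD inequality, telescoping of the Bregman terms, self-bounding of the adaptive sum, then Lemma~\ref{lem:regret-to-rate}) is the same as the paper's, and your final numeric substitutions ($\|g_t-M_t\|_\ast\le 2G$, $\sum_t t^2\le T^3$, $\sqrt{1+4G^2T^3}\le 1+2GT^{3/2}$) match. But there is a genuine gap at the self-bounding step. You apply Young's inequality with parameter $\eta_t$, producing the term $\sum_{t=1}^T\frac{\eta_t}{2}\alpha_t^2\|g_t-M_t\|_\ast^2=D\sum_{t=1}^T\frac{a_t}{\sqrt{1+\sum_{i<t}a_i}}$ with $a_t=\alpha_t^2\|g_t-M_t\|_\ast^2$, and then invoke ``$\sum_t a_t/\sqrt{1+\sum_{i<t}a_i}\le 2\sqrt{1+\sum_t a_t}$.'' That inequality is false: the correct statement (Lemma~\ref{lem:technical1}) requires the denominator to include the current index, $\sum_{i\le t}a_i$. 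With the strict prefix sum, a single large increment breaks it --- take $T=1$ and $a_1=N$, giving $N$ on the left versus $2\sqrt{1+N}$ on the right --- and the usual patch (bounding the ratio of consecutive prefix sums) needs $a_t\le a_0=1$, which fails here since $a_t$ can be as large as $4G^2t^2$ even when all earlier $a_i$ vanish. So your bound $R_T(x^\ast)\lesssim D\sqrt{1+\sum_t a_t}$ does not follow as written; the worst case of your argument only yields something of order $G^2T^2D$, which after dividing by $T^2$ is not $O(GD/\sqrt{T})$.

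The paper avoids this by choosing the Young parameter to be $\rho=\alpha_t^2\eta_{t+1}$ rather than $\alpha_t^2\eta_t$, so the gradient term becomes $\frac{\eta_{t+1}}{2}\alpha_t^2\|g_t-M_t\|_\ast^2$, whose denominator $\sqrt{1+\sum_{i\le t}a_i}$ does include index $t$ and hence Lemma~\ref{lem:technical1} applies cleanly. The price is that the companion term is $\frac{1}{2\eta_{t+1}}\|x_t-y_t\|^2$ with $\frac{1}{\eta_{t+1}}\ge\frac{1}{\eta_t}$, so it can no longer be fully absorbed by $-\frac{1}{\eta_t}D_{\mathcal R}(y_t,x_t)$; one is left with $\sum_t\bigl(\frac{1}{\eta_{t+1}}-\frac{1}{\eta_t}\bigr)\|x_t-y_t\|^2\le \frac{2D^2}{\eta_{T+1}}$, which together with the telescoped $D^2/\eta_T$ term accounts for the constant $\frac{7}{2}$ in front of $D\sqrt{1+\sum_t a_t}$. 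This interplay is exactly the ``care'' you flagged at the end, but the resolution is not a constant-tracking detail: without shifting to $\eta_{t+1}$ in Young's inequality the argument does not close.
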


We establish the basis of our analysis through Lemma 1 and Corollary 2 of \cite{rakhlin2013optimization}. Then, we build upon this base by exploiting the structure of the adaptive learning rate, the weights $\alpha_t$ and the bound on gradient norms to give adaptive convergence bounds.

\vspace{-2mm}
\paragraph{Stochastic setting:}Now, we further consider the case of stochastic gradients. We assume that the first-order oracles are unbiased (see Eq.~\eqref{def:stochastic-gradient}). We want to emphasize that our stochastic setting is \emph{not} restricted to the notion of additive noise, i.e. gradients corrupted with zero-mean noise. It essentially includes any estimate that recovers the full gradient in expectation, e.g. estimating gradient using mini batches. Additionally, we propagate the bounded gradient norm assumption to the stochastic oracles, such that $\| \tilde \nabla f(x) \|_\ast \leq G$, $\forall x \in \mathcal K$.

\begin{theorem} \label{thm:rate-nonsmooth-stochastic}
	Consider the optimization setting in Problem~(\ref{problem-def}), where $f$ is non-smooth, convex and $G$-Lipschitz. Let $\{ x_t \}_{t=1,..,T}$ be a sequence generated by Algorithm~\ref{alg:UniXGrad} such that $g_t = \tilde{\nabla} f(\bar{x}_t)$ and $M_t = \tilde{\nabla} f(\tilde{z}_t)$. With $\alpha_t = t$ and learning rate as in Eq.~(\ref{prelim:learning-rate}), it holds that
	\[
		\mathbb E \left[ f(\bar{x}_T) \right] - \min\limits_{x \in \mathcal K} f(x) \leq \frac{6 D}{ T^2} + \frac{14 G D}{\sqrt{T}}.
	\]
	
\end{theorem}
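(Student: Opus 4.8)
The plan is to derive Theorem~\ref{thm:rate-nonsmooth-stochastic} almost for free from the deterministic argument behind Theorem~\ref{thm:rate-nonsmooth-deterministic}, confining the only use of the noise to a single conditional-expectation step. The first observation is that the Mirror-Prox / Optimistic-Mirror-Descent weighted-regret estimate underlying Theorem~\ref{thm:rate-nonsmooth-deterministic} (Lemma~1 and Corollary~2 of \citep{rakhlin2013optimization}, adapted to the weights $\alpha_t=t$ and the lag-one-behind rule~\eqref{prelim:learning-rate}) is noise-agnostic: it never exploits that $g_t$ and $M_t$ are genuine (sub)gradients, only that $\norm{g_t}_\ast,\norm{M_t}_\ast\le G$ and that $\eta_t$ is measurable with respect to the information available before iteration~$t$. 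Hence, running that same computation with the stochastic choices $g_t=\tilde{\nabla}f(\bar x_t)$, $M_t=\tilde{\nabla}f(\tilde z_t)$ yields, for \emph{every} realization of the oracle noise, the very same pathwise bound on the weighted regret as in the deterministic proof,
\[
  R_T(x_\ast):=\sum_{t=1}^{T}\alpha_t\ip{x_t-x_\ast}{g_t}\;\le\;\tfrac12\Big(7D\sqrt{1+\sum_{t=1}^{T}\alpha_t^2\norm{g_t-M_t}_\ast^2}-D\Big).
\]

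Next I would handle the conversion to function value. Lemma~\ref{lem:regret-to-rate} as stated requires $g_t=\nabla f(\bar x_t)$, i.e.\ the \emph{exact} gradient, so it does not apply verbatim; however, its proof only invokes convexity of $f$ at the averaged points $\bar x_t$, so repeating it with the \emph{true} gradients $\nabla f(\bar x_t)$ gives the pathwise inequality
\[
  A_T\big(f(\bar x_T)-f(x_\ast)\big)\;\le\;\widetilde R_T(x_\ast):=\sum_{t=1}^{T}\alpha_t\ip{x_t-x_\ast}{\nabla f(\bar x_t)},\qquad A_T:=\sum_{t=1}^{T}\alpha_t\ge\tfrac{T^2}{2}.
\]
It then remains to show $\mathbb E[\widetilde R_T(x_\ast)]=\mathbb E[R_T(x_\ast)]$. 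For this I would fix, for each $t$, the $\sigma$-algebra $\mathcal F_t$ generated by all oracle answers up to and including the query at $\tilde z_t$ (which is performed before the one at $\bar x_t$). Since $\tilde z_t$ is a function of $y_{t-1}$ and $x_1,\dots,x_{t-1}$ only, $\eta_t$ depends only on $\{g_i-M_i\}_{i<t}$, and $x_t$ — hence $\bar x_t$ — is computed from $M_t=\tilde{\nabla}f(\tilde z_t)$, the quantities $x_t,\bar x_t,x_\ast,\alpha_t$ are $\mathcal F_t$-measurable, whereas $g_t=\tilde{\nabla}f(\bar x_t)$ — the answer to the \emph{second} query — obeys $\mathbb E[g_t\mid\mathcal F_t]=\nabla f(\bar x_t)$ by~\eqref{def:stochastic-gradient}. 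Taking conditional expectations term by term gives $\mathbb E[\alpha_t\ip{x_t-x_\ast}{g_t}]=\mathbb E[\alpha_t\ip{x_t-x_\ast}{\nabla f(\bar x_t)}]$, and summing over $t$ gives the desired identity.

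Finally, combining the three displays yields $\mathbb E[f(\bar x_T)]-f(x_\ast)\le\tfrac{2}{T^2}\,\mathbb E[R_T(x_\ast)]$, and since $\norm{g_t-M_t}_\ast\le\norm{g_t}_\ast+\norm{M_t}_\ast\le 2G$ and $\sum_{t=1}^{T}\alpha_t^2=\sum_{t=1}^{T}t^2\le T^3$ hold surely, one gets $\sqrt{1+\sum_t\alpha_t^2\norm{g_t-M_t}_\ast^2}\le 1+2GT^{3/2}$ surely, so the right-hand side is at most $\tfrac{7D(1+2GT^{3/2})-D}{T^2}=\tfrac{6D}{T^2}+\tfrac{14GD}{\sqrt T}$, exactly as in the last step of the proof of Theorem~\ref{thm:rate-nonsmooth-deterministic}. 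The one genuinely new ingredient, and the place where care is needed, is the filtration bookkeeping above: one must use the specific ordering of the two oracle queries inside an iteration (first at $\tilde z_t$, then at $\bar x_t$) together with the fact that $g_t$ is the estimate at the \emph{running average} $\bar x_t$ — not at $y_t$ or at the next iterate — so that $\bar x_t$ is frozen before the noise in $g_t$ is drawn. I expect no difficulty with variance or concentration, since the bounded-gradient hypothesis makes $\norm{g_t-M_t}_\ast\le 2G$ a \emph{deterministic} bound and no Jensen-type step is required.
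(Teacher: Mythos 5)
Your proposal is correct and follows essentially the same route as the paper: the paper likewise runs the deterministic regret computation verbatim on the stochastic $\tilde g_t,\tilde M_t$ (using only the bound $\|\tilde\nabla f\|_\ast\le G$), splits $\sum_t\alpha_t\ip{x_t-x_\ast}{\nabla f(\bar x_t)}$ into the stochastic regret plus a term that vanishes in conditional expectation, and then invokes Lemma~\ref{lem:regret-to-rate}. Your explicit filtration bookkeeping (that $\bar x_t$ is determined by the answer to the earlier query at $\tilde z_t$, so $\mathbb E[g_t\mid\mathcal F_t]=\nabla f(\bar x_t)$) is a more careful rendering of the paper's one-line claim that term (B) ``is zero in expectation conditioned on $\bar x_t$,'' but it is the same argument.
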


The analysis in the stochastic setting is similar to deterministic setting. The difference is up to replacing $g_t \leftrightarrow \tilde{g}_t$ and $M_t \leftrightarrow \tilde{M}_t$. With the bound on stochastic gradients, the same rate is achieved.

%We decompose $g_t$ as $\tilde{g}_t + (g_t - \tilde{g}_t)$ and rewrite the weighted regret as
%
%\vspace{-4mm}
%\begin{align*}
%	R_T(x_\ast) &= \sum_{t=1}^{T} \alpha_t \ip{x_t - x_\ast}{\tilde{g}_t} + \sum_{t=1}^{T} \alpha_t \ip{x_t - x^\ast}{g_t - \tilde{g_t}}
%\end{align*}
%
%Due to unbiasedness of the gradient estimates, expected value of $\alpha_t \ip{x_t - x^\ast}{g_t - \tilde{g_t}}$, conditioned on the average iterate $\bar{x}_t$ evaluates to 0. We will only need to bound the first summation whose analysis is identical to its deterministic counterpart up to replacing $g_t$ with $\tilde{g}_t$, and $M_t$ with $\tilde{M}_t$. Therefore, we safely obtain the same worst-case bound with the deterministic oracle scheme without suffering neither $\log{T}$ nor $\sigma$ dependencies on the convergence rate.

%\[
%	f(\bar{x}_T) - \min\limits_{x \in \mathcal K} f(x) \leq \underbrace{ \frac{1}{A_T} \sum_{t=1}^{T} \alpha_t \ip{x_t - x^\ast}{\tilde{g_t}} }_\textrm{(A)} + \underbrace{ \frac{1}{A_T} \sum_{t=1}^{T} \alpha_t \ip{x_t - x^\ast}{g_t - \tilde{g_t}} }_\textrm{(B)}.
%\]
%
%When we take the conditional expectation given the average iterate $\bar{x}_t$, observe that the term (B) evaluates to zero. For term (A), the proof is almost exactly the same as before. We use Jensen's inequality to take expectation into the square root and use the boundedness assumption on the gradients to obtain the bound.

\subsection{Smooth setting} \label{sec:smooth-case}

\paragraph{Deterministic setting:}In terms of theoretical contributions and novelty, the case of $L$-smooth objective is of greater interest.
%In the non-smooth analysis, we ignore a negative summation term. When coupled with smoothness of the objective and a particular characterization of the growth of learning rate $\eta_t$, we will obtain the accelerated rates both in deterministic and stochastic settings.
We will first start with the deterministic oracle scheme and then introduce the convergence theorem for the noisy setting.

\begin{theorem} \label{thm:rate-smooth-deterministic}
	Consider the constrained optimization setting in Problem~(\ref{problem-def}), where $f: \mathcal K \rightarrow \mathbb R$ is a proper, convex and $L$-smooth function defined over compact, convex set $\mathcal K$. Let $x^\ast \in \min_{x \in \mathcal K} f(x)$. Then, Algorithm~\ref{alg:UniXGrad} ensures the following
	\vspace{-1mm}
	\begin{align}
		f(\bar{x}_T) - \min\limits_{x \in \mathcal K} f(x) \leq \frac{20 \sqrt{7} D^2 L}{T^2}.
	\end{align}
\end{theorem}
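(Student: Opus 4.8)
Theorem~\ref{thm:rate-smooth-deterministic} will be proved by pushing the weighted-regret machinery behind Theorem~\ref{thm:rate-nonsmooth-deterministic} into the smooth regime, the essential new ingredient being that in the smooth case we must \emph{retain} the negative terms that the Mirror-Prox analysis produces and that were simply discarded in the non-smooth case. By Lemma~\ref{lem:regret-to-rate} it suffices to bound the weighted regret $R_T(x_\ast)=\sum_{t=1}^T\alpha_t\ip{x_t-x_\ast}{g_t}$ and then multiply by $2/T^2$. Unrolling the two proximal updates of Algorithm~\ref{alg:UniXGrad} via their first-order optimality conditions, combining the two Bregman inequalities (the $D_{\mathcal R}(y_t,y_{t-1})$ terms cancel), and estimating the cross term $\alpha_t\ip{g_t-M_t}{x_t-y_t}$ by Young's inequality against $\tfrac1{\eta_t}D_{\mathcal R}(y_t,x_t)$ — the standard optimistic-OMD computation, cf.\ Lemma~1 and Corollary~2 of \citep{rakhlin2013optimization} — yields, for every $t$,
\[
\alpha_t\ip{x_t-x_\ast}{g_t}\ \le\ \tfrac{1}{\eta_t}\bigl(D_{\mathcal R}(x_\ast,y_{t-1})-D_{\mathcal R}(x_\ast,y_t)\bigr)+\tfrac{\eta_t}{2}\,\alpha_t^2\norm{g_t-M_t}_\ast^2-\tfrac{1}{2\eta_t}\norm{x_t-y_{t-1}}^2,
\]
where the last term comes from the strong convexity of $\mathcal R$ in the $x_t$-step. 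Summing, telescoping the Bregman differences (using that $1/\eta_t$ is non-decreasing and $D_{\mathcal R}\le D^2$, which bounds their sum by $D^2/\eta_T=\tfrac{D}{2}\sqrt{1+\sum_{i<T}\alpha_i^2\norm{g_i-M_i}_\ast^2}$), gives $R_T(x_\ast)\le \tfrac{D^2}{\eta_T}+\sum_{t=1}^T\bigl(\tfrac{\eta_t}{2}\alpha_t^2\norm{g_t-M_t}_\ast^2-\tfrac{1}{2\eta_t}\norm{x_t-y_{t-1}}^2\bigr)$.

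Now I bring in smoothness. The key identity, which is exactly what the averaging \eqref{prelim:avg-iterates} and the choice $\alpha_t=t$ are designed for, is
\[
\bar{x}_t-\tilde{z}_t=\frac{\alpha_t}{\sum_{i\le t}\alpha_i}\,(x_t-y_{t-1}),\qquad \frac{\alpha_t}{\sum_{i\le t}\alpha_i}=\frac{2}{t+1}\le\frac{2}{\alpha_t},
\]
so $L$-smoothness gives $\alpha_t^2\norm{g_t-M_t}_\ast^2=\alpha_t^2\norm{\nabla f(\bar{x}_t)-\nabla f(\tilde{z}_t)}_\ast^2\le \alpha_t^2L^2\norm{\bar{x}_t-\tilde{z}_t}^2\le 4L^2\norm{x_t-y_{t-1}}^2$. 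Substituting, each summand above is at most $\bigl(2L^2\eta_t-\tfrac{1}{2\eta_t}\bigr)\norm{x_t-y_{t-1}}^2$, which is $\le 0$ as soon as $\eta_t\le\tfrac1{2L}$. Since $\eta_t$ is non-increasing, the positive contributions come only from an initial block of iterations on which $\eta_t>\tfrac1{2L}$, equivalently on which the accumulated ``optimism error'' satisfies $1+\sum_{i<t}\alpha_i^2\norm{g_i-M_i}_\ast^2<16L^2D^2$.

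The final, and most delicate, step is a self-bounding argument converting this into an absolute $O(LD^2)$ bound on $R_T(x_\ast)$. One feeds the reverse inequality $\norm{x_t-y_{t-1}}^2\ge\tfrac1{4L^2}\alpha_t^2\norm{g_t-M_t}_\ast^2$ back into the regret inequality, bounds the transient block using $\norm{x_t-y_{t-1}}^2\le 2D_{\mathcal R}(x_t,y_{t-1})\le 2D^2$ together with the stopping condition above (which caps $1+\sum_{i<t}\alpha_i^2\norm{g_i-M_i}_\ast^2$ by a constant multiple of $L^2D^2$ on that block), and solves the resulting quadratic-type inequality to conclude $\sum_{t\le T}\alpha_t^2\norm{g_t-M_t}_\ast^2=O(L^2D^2)$; plugging this into the bound of Theorem~\ref{thm:rate-nonsmooth-deterministic} written as $f(\bar{x}_T)-f(x_\ast)\le\tfrac1{T^2}\bigl(7D\sqrt{1+\sum_{t}\alpha_t^2\norm{g_t-M_t}_\ast^2}-D\bigr)$ and tracking constants gives $f(\bar{x}_T)-\min_{\mathcal K}f\le 20\sqrt7\,D^2L/T^2$. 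I expect the main obstacle to be precisely this self-bounding: one must show the adaptive step size cannot stay above $1/(2L)$ long enough to accumulate more than $O(L^2D^2)$ of optimism error, while simultaneously handling the lag-one-behind telescoping of $\eta_t$ without leaking spurious logarithmic or $LD$-type factors — it is here that the exact weighting $\alpha_t=t$ and the precise form of the learning rate \eqref{prelim:learning-rate} are used in an essential way.
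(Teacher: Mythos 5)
Your proposal tracks the paper's proof almost exactly through its first two stages: the optimistic-OMD regret decomposition with the negative proximity terms $-\tfrac{1}{2\eta_{t+1}}\norm{x_t-y_{t-1}}^2$ retained, and the smoothness step $\alpha_t^2\norm{g_t-M_t}_\ast^2\le 4L^2\norm{x_t-y_{t-1}}^2$ via $\bar{x}_t-\tilde{z}_t=\tfrac{\alpha_t}{A_t}(x_t-y_{t-1})$, so that each summand acquires the coefficient $\eta-\tfrac{1}{4L^2\eta}$, which turns negative once the adaptive step size drops below $\Theta(1/L)$. Where you diverge is the closing step. The paper never proves your claim that $S_T:=1+\sum_{t\le T}\alpha_t^2\norm{g_t-M_t}_\ast^2=O(L^2D^2)$; instead it splits the sum at $\tau^\ast=\max\{t:\eta_{t+1}^{-2}\le 7L^2\}$ and uses \emph{both} inequalities of Lemma~\ref{lem:technical1}: the upper bound controls the pre-$\tau^\ast$ block by $4D^2/\eta_{\tau^\ast+1}\le 4\sqrt{7}D^2L$, while the \emph{lower} bound $\sqrt{\sum_i a_i}\le\sum_i a_i/\sqrt{\sum_{j\le i}a_j}$ lets the terminal term $3D^2/\eta_{T+1}=\tfrac{3D}{2}\sqrt{S_T}$ (which a priori grows with $T$) be absorbed by the retained negative tail over $t>\tau^\ast$, leaving only another $O(D^2/\eta_{\tau^\ast+1})$ remainder. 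Your alternative --- combine $R_T(x_\ast)\ge A_T(f(\bar{x}_T)-f(x_\ast))\ge 0$ with the regret inequality and solve the resulting $3/2$-power inequality in $S_T$ --- does in fact work (the negative terms contribute $-\Omega(S_T^{3/2}/(L^2D))$ against positive terms of order $D\sqrt{S_T}$, forcing $S_T=O(L^2D^2)$), so the route is sound and arguably more transparent; but you have left precisely this step, which you yourself flag as the main obstacle, as a one-sentence sketch, and carrying it out yields a constant visibly worse than $20\sqrt{7}$, since that constant comes from the paper's term-by-term cancellation rather than from any bound on $S_T$. One technical slip matters for either route: your Young's inequality produces the positive term $\tfrac{\eta_t}{2}\alpha_t^2\norm{g_t-M_t}_\ast^2$, whose denominator is the \emph{lagged} $\sqrt{S_{t-1}}$, so Lemma~\ref{lem:technical1} does not apply directly and repairing it costs an extra additive $O(L^2D^2)$ via the per-term bound $\alpha_t^2\norm{g_t-M_t}_\ast^2\le 8L^2D^2$; the paper sidesteps this by choosing $\rho=\alpha_t^2\eta_{t+1}$ in Eq.~\eqref{def:gen-youngs-ineq}, so the positive term carries $1/\sqrt{S_t}$ and sums cleanly.
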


\begin{remark}
	In the non-smooth setting, we assume that gradients have bounded norms. Our algorithm does \textbf{not} need to know this information, but it is necessary for the analysis in that case. However, when the function is smooth, neither the algorithm nor the analysis requires bounded gradients.
\end{remark}

\begin{proof}[Proof Sketch (Theorem~\ref{thm:rate-smooth-deterministic})] We follow the proof of Theorem~\ref{thm:rate-nonsmooth-deterministic} until the point where we obtain

\vspace{-4mm}
\begin{align*}
	\sum_{t=1}^{T} \alpha_t \ip{x_t - x_\ast}{g_t} &\leq \frac{1}{2} \sum_{t=1}^{T} \eta_{t+1} \alpha_t^2 \norm{ g_t - M_t }_\ast^2 - \frac{1}{2} \sum_{t=1}^{T} \frac{1}{\eta_{t+1}} \norm{x_t - y_{t-1}}^2 + D^2 \br{ \frac{3}{\eta_{T+1}} + \frac{1}{\eta_1} } .
\end{align*}

By smoothness of the objective function, we have $\norm{g_t - M_t}_\ast \leq L \norm{\bar{x}_t - \tilde{z}_t}$, which implies $- \frac{1}{\eta_{t+1}} \norm{x_t - y_{t-1}}^2 \leq - \frac{\alpha_t^2}{4 L^2 \eta_{t+1}} \norm{g_t - M_t}_\ast^2$. Hence,

\vspace{-3mm}
\begin{align*}
	\leq \frac{1}{2} \sum_{t=1}^{T} \br{ \eta_{t+1} - \frac{1}{4L^2 \eta_{t+1}} } \alpha_t^2 \norm{g_t - M_t}_\ast^2 + D^2 \br{ \frac{3}{\eta_{T+1}} + \frac{1}{\eta_1} } .
\end{align*}

Now we will introduce a time variable to \emph{characterize} the growth of the learning rate. Define $\tau^\ast = \max \bc{ t \in \bc{ 1, ..., T } \, : \, \frac{1}{\eta_{t+1}^2} \leq 7 L^2}$ such that $\forall t > \tau^\ast$, $\eta_{t+1} - \frac{1}{4 L^2 \eta_{t+1}} \leq -\frac{3}{4} \eta_{t+1} $. Then,

\vspace{-3mm}
\begin{align*}
	&\leq  \underbrace{ D \sum_{t=1}^{\tau^\ast} \frac{ \alpha_t^2 \norm{g_t - M_t}_\ast^2 }{ \sqrt{ 1 + \sum_{i=1}^{t} \alpha_i^2 \norm{g_i - M_i}_\ast^2 } } + \frac{ D}{2} }_\textrm{(A)} \\
	&+ \underbrace{ \frac{3 D}{2} \br{  \sqrt{ 1 + \sum_{t=1}^{T} \alpha_t^2 \norm{ g_t - M_t } _\ast^2 } - \sum_{t=\tau^\ast + 1}^{T} \frac{\alpha_t^2 \norm{g_t - M_t}_\ast^2}{\sqrt{ 1 + \sum_{i=1}^{t} \alpha_i^2 \norm{g_i - M_i}_\ast^2 }} } }_\textrm{(B)},
\end{align*}

where we wrote $\eta_{t+1}$ in open form and used the definition of $\tau^\ast$. To complete the proof, we will need the following lemma.

\begin{lemma} \label{lem:technical1}
	Let $\bc{a_i}_{i=1, ..., n}$ be a sequence of non negative numbers. Then, it holds that
	
	\[
		\sqrt{ \sum_{i=1}^{n} a_i } \leq \sum_{i=1}^{n} \frac{ a_i }{\sum_{j=1}^{i} a_j } \leq 2 \sqrt{ \sum_{i=1}^{n} a_i }.
	\]
	
\end{lemma}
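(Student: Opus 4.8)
The plan is to reduce both bounds to a telescoping sum via a partial-sum substitution. Set $S_0 := 0$ and $S_i := \sum_{j=1}^{i} a_j$ for $i \ge 1$, so that $a_i = S_i - S_{i-1}$; writing $\Sigma := \sum_{i=1}^{n} a_i / \sqrt{S_i}$, the claimed chain of inequalities reads $\sqrt{S_n} \le \Sigma \le 2\sqrt{S_n}$ (I read the denominators as $\sqrt{S_i}$, matching the quantity $\sqrt{1+\sum_{i\le t}\alpha_i^2\norm{g_i-M_i}_\ast^2}$ that appears in the proof sketch of Theorem~\ref{thm:rate-smooth-deterministic}, to which the lemma is applied). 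If every $a_i$ vanishes both sides are $0$, so I may assume $S_n > 0$; any index with $S_i = 0$ forces $a_i = 0$ and contributes a zero term, so I may discard those and assume $S_i > 0$ for all $i$.

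For the lower bound I will exploit monotonicity of the partial sums: since $S_i \le S_n$ for every $i \le n$, each term obeys $a_i/\sqrt{S_i} \ge a_i/\sqrt{S_n}$, and summing gives $\Sigma \ge \frac{1}{\sqrt{S_n}} \sum_{i=1}^n a_i = \frac{S_n}{\sqrt{S_n}} = \sqrt{S_n}$, which is the left-hand inequality.

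For the upper bound the key step is the per-term estimate $a_i/\sqrt{S_i} \le 2(\sqrt{S_i} - \sqrt{S_{i-1}})$. This follows from the difference-of-squares factorization $a_i = S_i - S_{i-1} = (\sqrt{S_i} - \sqrt{S_{i-1}})(\sqrt{S_i} + \sqrt{S_{i-1}})$ together with $\sqrt{S_i} + \sqrt{S_{i-1}} \le 2\sqrt{S_i}$ (using $S_{i-1} \le S_i$). Summing over $i = 1,\dots,n$ telescopes: $\Sigma \le 2\sum_{i=1}^{n}(\sqrt{S_i} - \sqrt{S_{i-1}}) = 2(\sqrt{S_n} - \sqrt{S_0}) = 2\sqrt{S_n}$, the right-hand inequality.

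There is no genuine obstacle here; the argument is a short telescoping computation. The only points requiring a little care are the degenerate case where some partial sums vanish (handled by dropping the corresponding zero terms) and recognizing the factorization of $a_i$ that makes the telescoping sum appear — once that is in place, both inequalities are each a one-line manipulation.
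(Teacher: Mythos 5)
Your proof is correct, and you were right to read the denominator as $\sqrt{\sum_{j=1}^{i} a_j}$: as literally printed the lemma is false (take $n=1$, $a_1=4$: the middle expression is $1$ while $\sqrt{a_1}=2$), and every place the lemma is invoked in the appendix the denominator carries the square root, so this is a typo in the statement. Note, however, that the paper does not actually prove this lemma; it defers to McMahan--Streeter (2010) and Levy et al.\ (2018) and remarks only that the proof is ``due to induction.'' Your telescoping argument is essentially that induction unrolled: the per-term estimate $a_i/\sqrt{S_i}\le 2\bigl(\sqrt{S_i}-\sqrt{S_{i-1}}\bigr)$, obtained from the factorization $a_i=(\sqrt{S_i}-\sqrt{S_{i-1}})(\sqrt{S_i}+\sqrt{S_{i-1}})$, is exactly the inequality that drives the inductive step $2\sqrt{S_{n-1}}+a_n/\sqrt{S_n}\le 2\sqrt{S_n}$ in the cited references, and your lower bound via $S_i\le S_n$ is the standard one. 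So the route is the same in substance, but your write-up has the advantage of being self-contained, of handling the degenerate case $S_i=0$ explicitly, and of flagging the missing square root.
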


Please refer to \citep{mcmahan2010adaptive, levy2018online} for the proof. We jointly use Lemma~\ref{lem:technical1} and the bound on $\eta_{\tau* + 1}$ to upper bound terms (A) and (B) with $4 \sqrt{7} D^2 L$ and $6 \sqrt{7} D^2 L$, respectively. Lemma~\ref{lem:regret-to-rate} immediately establishes the convergence bound.
\end{proof}

\paragraph{Stochastic setting:}Next, we will present our results for the stochastic extension. In addition to unbiasedness and boundedness, we will introduce another classical assumption: bounded variance,

\vspace{-3mm}
\begin{align} \label{def:bounded-variance}
	E [\|\nabla f(x) - \tilde{\nabla}f(x)\|_\ast^2 \vert x] \leq \sigma^2, \quad \forall x \in \mathcal K.
\end{align}

The analysis proceeds along similar lines as its deterministic counterpart. However, we execute the analysis using auxiliary terms and attain the optimal accelerated rate without the log factors. 

\begin{theorem} \label{thm:rate-smooth-stochastic}
	Consider the optimization setting in Problem~\eqref{problem-def}, where $f$ is $L$-smooth and convex. Let $\{ x_t \}_{t=1,..,T}$ be a sequence generated by Algorithm~\ref{alg:UniXGrad} such that $g_t = \tilde{\nabla} f(\bar{x}_t)$ and $M_t = \tilde{\nabla} f(\tilde{z}_t)$. With $\alpha_t = t$ and learning rate as in \eqref{prelim:learning-rate}, it holds that
	\vspace{-1mm}
	\[
		\mathbb E \left[ f(\bar{x}_T) \right] - \min\limits_{x \in \mathcal K} f(x) \leq \frac{224 \sqrt{14} D^2 L }{T^2} + \frac{14\sqrt{2} \sigma D }{\sqrt{T}}.
	\]
	
\end{theorem}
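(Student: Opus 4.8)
The plan is to mirror the deterministic smooth proof (Theorem~\ref{thm:rate-smooth-deterministic}) while carefully tracking the noise. First I would start from the weighted-regret inequality obtained exactly as before,
\[
	\sum_{t=1}^{T} \alpha_t \ip{x_t - x_\ast}{g_t} \leq \frac{1}{2} \sum_{t=1}^{T} \eta_{t+1} \alpha_t^2 \norm{g_t - M_t}_\ast^2 - \frac{1}{2} \sum_{t=1}^{T} \frac{1}{\eta_{t+1}} \norm{x_t - y_{t-1}}^2 + D^2 \br{ \frac{3}{\eta_{T+1}} + \frac{1}{\eta_1} },
\]
where now $g_t = \tilde\nabla f(\bar x_t)$ and $M_t = \tilde\nabla f(\tilde z_t)$ are stochastic. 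The key difficulty, and the reason the deterministic argument does not transfer verbatim, is that the smoothness-based inequality $\norm{g_t - M_t}_\ast \le L\norm{\bar x_t - \tilde z_t}$ is no longer true pointwise: $g_t - M_t$ now contains the noise in both oracle calls. So I would split $g_t - M_t = (\nabla f(\bar x_t) - \nabla f(\tilde z_t)) + \xi_t$, where $\xi_t$ collects the two noise terms with $\Expect[\,\cdot\mid \mathcal F_{t-1}]$-conditional variance controlled by $\sigma^2$ (using \eqref{def:bounded-variance}, and noting $\bar x_t$, $\tilde z_t$ are measurable given the randomness up to the point where $g_t,M_t$ are drawn). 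Using $\norm{a+b}_\ast^2 \le 2\norm a_\ast^2 + 2\norm b_\ast^2$ and $\norm{\bar x_t - \tilde z_t} = \frac{\alpha_t}{\sum_{i\le t}\alpha_i}\norm{x_t - y_{t-1}} \le \frac{2\alpha_t}{t^2}\norm{x_t-y_{t-1}}$ (since $\sum_{i\le t}\alpha_i = t(t+1)/2 \ge t^2/2$), the ``deterministic part'' contributes $\frac{\alpha_t^2}{t^4}\cdot 8L^2 \alpha_t^2 \norm{x_t-y_{t-1}}^2$-type terms that can still be absorbed by the negative $-\frac{1}{\eta_{t+1}}\norm{x_t-y_{t-1}}^2$ term, while the noise part $2\eta_{t+1}\alpha_t^2\norm{\xi_t}_\ast^2$ must be carried separately.

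The second ingredient is the auxiliary-sequence / online-to-batch bookkeeping alluded to in the paragraph preceding the theorem. I would introduce, as in the stochastic nonsmooth analysis, the ``clean'' gradients $\nabla f(\bar x_t)$ and write $\alpha_t\ip{x_t - x_\ast}{g_t} = \alpha_t\ip{x_t - x_\ast}{\nabla f(\bar x_t)} + \alpha_t\ip{x_t - x_\ast}{g_t - \nabla f(\bar x_t)}$; taking expectations, the cross term involving $\Expect[g_t - \nabla f(\bar x_t)\mid \mathcal F_{t-1}] = 0$ would vanish \emph{if} $x_t$ were $\mathcal F_{t-1}$-measurable, which it is not ($x_t$ depends on $M_t$). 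The standard fix is to further decompose using $x_t - y_{t-1}$ and a Young step, or — cleaner here — to bound $\Expect\sum_t \alpha_t \ip{x_t - y_{t-1}}{g_t - \nabla f(\bar x_t)}$ by $\sum_t \frac{1}{2\eta_{t+1}}\Expect\norm{x_t-y_{t-1}}^2 + \frac{\eta_{t+1}}{2}\alpha_t^2\Expect\norm{g_t-\nabla f(\bar x_t)}_\ast^2$, again eating the first piece with the available negative term and leaving an $O(\sigma^2)$ residual. After these reductions, $\Expect R_T(x_\ast)$ is bounded by the deterministic quantity $20\sqrt7 D^2 L$-type expression plus $C\,\Expect\sum_{t=1}^T \eta_{t+1}\alpha_t^2\sigma^2$ for an absolute constant $C$.

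Finally I would control the residual noise sum. Since $\eta_{t+1} = 2D/\sqrt{1+\sum_{i\le t}\alpha_i^2\norm{g_i-M_i}_\ast^2} \le 2D/\sqrt{\sum_{i\le t}\alpha_i^2\norm{g_i-M_i}_\ast^2}$ is random, the crude bound $\eta_{t+1}\le 2D$ gives $\sum_t \eta_{t+1}\alpha_t^2 \le 2D\sum_t t^2 = O(D T^3)$, which after dividing by $T^2$ (Lemma~\ref{lem:regret-to-rate}) yields exactly the $O(\sigma D\sqrt T)$ term — too big by a $T$ factor? No: $\frac{1}{T^2}\cdot D T^3 \sigma^2$ is $D T \sigma^2$, wrong. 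The correct route, and the main technical obstacle, is to \emph{not} discard the adaptivity: one shows via Lemma~\ref{lem:technical1} that $\Expect\sum_t \eta_{t+1}\alpha_t^2\norm{g_t-M_t}_\ast^2 \le 4D\,\Expect\sqrt{1+\sum_t\alpha_t^2\norm{g_t-M_t}_\ast^2}$, and then uses $\norm{g_t - M_t}_\ast^2 \le 2L^2\norm{\bar x_t-\tilde z_t}^2 + 2\norm{\xi_t}_\ast^2$ together with Jensen to pull the expectation inside the square root, producing $\sqrt{\Expect\sum_t \alpha_t^2\norm{\xi_t}_\ast^2} \le \sqrt{\sigma^2\sum_t\alpha_t^2\cdot\text{const}} = O(\sigma T^{3/2})$; dividing by $T^2$ gives the claimed $O(\sigma D/\sqrt T)$. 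The $L^2\norm{\bar x_t-\tilde z_t}^2$ piece inside the square root feeds back into the $-\frac{1}{\eta_{t+1}}\norm{x_t-y_{t-1}}^2$ absorption and the $\tau^\ast$-splitting argument exactly as in Theorem~\ref{thm:rate-smooth-deterministic}, contributing the $O(D^2L/T^2)$ term. Chasing the absolute constants through the two Young inequalities, the factor-of-2's from splitting the noise, and the $\tau^\ast$ case analysis is what produces the explicit $224\sqrt{14}$ and $14\sqrt2$; I expect the constant-tracking — rather than any conceptual step — to be the most laborious part, with the genuinely delicate point being the simultaneous absorption of \emph{both} the smoothness term and the online-to-batch cross term into the single negative quadratic $-\tfrac12\sum_t \eta_{t+1}^{-1}\norm{x_t-y_{t-1}}^2$.
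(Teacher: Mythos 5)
Your high-level plan (regret decomposition, smoothness to convert $-\eta_{t+1}^{-1}\norm{x_t-y_{t-1}}^2$ into a negative multiple of a squared gradient difference, a $\tau^\ast$-splitting with Lemma~\ref{lem:technical1}, and Jensen for the noise) matches the paper, but the central absorption step has a genuine gap. Your first paragraph proposes to carry the noise as $2\eta_{t+1}\alpha_t^2\norm{\xi_t}_\ast^2$ ``separately.'' This cannot work: even past $\tau^\ast$, where $\eta_{t+1}=O(1/L)$, this sum is $O(\sigma^2 T^3/L)$, which after Lemma~\ref{lem:regret-to-rate} gives a divergent $O(\sigma^2 T/L)$; the noise can only enter at the right order if it stays under a single square root, i.e.\ as $\sqrt{\sum_t\alpha_t^2\norm{\xi_t}_\ast^2}=O(\sigma T^{3/2})$. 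Your third paragraph partially corrects this by applying Lemma~\ref{lem:technical1} to the full noisy sum first and splitting under the square root, but then asserts the smooth piece ``feeds back into the absorption and the $\tau^\ast$-splitting exactly as in Theorem~\ref{thm:rate-smooth-deterministic}.'' It does not: after that split the positive smooth contribution is a square root of $\sum_t L^2\norm{x_t-y_{t-1}}^2$, while the negative term is a per-term sum weighted by $1/\eta_{t+1}$ with $\eta$ built from the \emph{noisy} differences; the two no longer pair term-by-term, and a crude absorption via $\eta_{t+1}\le 2D$ and AM--GM yields $O(D^3L^2)$ rather than $O(D^2L)$. The paper's missing device is to set $B_t^2=\min\{\norm{g_t-M_t}_\ast^2,\norm{\tg_t-\tM_t}_\ast^2\}$ and an auxiliary rate $\teta_t$ built from the $B_i$: since $B_t^2$ is dominated by the true (smoothness-controlled) difference, while $\norm{\tg_t-\tM_t}_\ast^2\le 2B_t^2+2\norm{\xi_t}_\ast^2$ and $1/\eta_t\ge 1/\teta_t$, both the positive and negative parts become per-term expressions in $\alpha_t^2B_t^2$ with reciprocal rates $\teta_{t+1}$ and $1/(28L^2\teta_{t+1})$, the noise appears only once under a square root, and the $\tau^\ast$ argument (defined via $\teta$) closes. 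Without this, or an equivalent construction, your absorption step does not go through.

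A secondary point: your worry that the online-to-batch cross term $\sum_t\alpha_t\ip{x_t-x_\ast}{g_t-\tg_t}$ fails to vanish because $x_t$ is not $\mathcal F_{t-1}$-measurable is unfounded. The oracle is queried at $\bar x_t$ only after $x_t$ (hence $\bar x_t$) has been computed from $M_t$, so conditioning on the filtration generated by $M_1,\tg_1,\dots,M_t$ makes $x_t$ and $\nabla f(\bar x_t)$ measurable and $\Expect[\tg_t\mid\cdot]=\nabla f(\bar x_t)$ by Eq.~\eqref{def:stochastic-gradient}; the term is a martingale difference and vanishes in expectation with no extra work. Your proposed Young's-inequality fix is not only unnecessary but harmful, since it consumes part of the negative quadratic $-\tfrac12\sum_t\eta_{t+1}^{-1}\norm{x_t-y_{t-1}}^2$ that the smoothness absorption already needs in full, and it reintroduces a $\sum_t\eta_{t+1}\alpha_t^2\sigma^2$ residual of exactly the problematic form described above.
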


\begin{proof}[Proof Sketch (Theorem~\ref{thm:rate-smooth-stochastic})] 

	We start in the same spirit as the stochastic, non-smooth setting,
	\vspace{-1mm}
	\[
	\sum_{t=1}^{T} \alpha_t \ip{x_t - x_\ast}{g_t} \leq \underbrace{ \sum_{t=1}^{T} \alpha_t \ip{x_t - x^\ast}{\tilde{g_t}} }_\textrm{(A)} + \underbrace{ \sum_{t=1}^{T} \alpha_t \ip{x_t - x^\ast}{g_t - \tilde{g_t}} }_\textrm{(B)}.
	\]
	
	Recall that term (B) is zero in expectation given $\bar{x}_t$. Then, we follow the proof steps of Theorem~\ref{thm:rate-nonsmooth-deterministic},
	\vspace{-1mm}
	\begin{align} \label{prf:middle-step}
		\sum_{t=1}^{T} \alpha_t \ip{x_t - x_\ast}{g_t} \leq \frac{7 D}{2} \sqrt{ 1 + \sum_{t=1}^{T} \alpha_t^2 \| \tilde{g}_t - \tilde{M}_t \|_\ast^2 } - \frac{1}{2} \sum_{t=1}^{T} \frac{1}{\eta_{t+1}} \norm{x_t - y_{t-1}}^2.
	\end{align}
	
	We will obtain $\norm{g_t - M_t}_\ast^2$ from $\norm{x_t - y_{t-1}}^2$ due to smoothness and the challenge is to handle $\| \tilde{g}_t - \tilde{M}_t \|_\ast^2$ and $\norm{g_t - M_t}_\ast^2$ together. So let's denote, $ B_t^2: = \min \{\|g_t-M_t\|_\ast^2, \| \tg_t -\tM_t\|_\ast^2\} $. Using this definition, we could declare an auxiliary learning rate which we will \emph{only} use for the analysis,
	\vspace{-0.5mm}
	\begin{align} \label{def:aux-learning-rate}
		\teta_t = \frac{2 D}{\sqrt{ 1 + \sum\limits_{i=1}^{t-1} \alpha_i^2 B_i^2}} .
	\end{align}
	
	Clearly, for any $t\in[T]$ we have $-\frac{1}{\eta_{t+1}} \norm{g_t - M_t}_\ast^2 \leq  -\frac{1}{\teta_{t+1}}B_t^2$.
        Also, we can write,
	
        \begin{align}\label{eq:Ineq33}
        		\|\tg_t -\tM_t\|_\ast^2 \leq 2\|g_t-M_t\|_\ast^2  +2\|\xi_t\|_\ast^2,
        \end{align}
        and,
        \begin{align*}
        \|\tg_t -\tM_t\|_\ast^2 \leq 2B_t^2 + 2\|\xi_t\|_\ast^2.
        \end{align*}
        
        Therefore, we could rewrite Eq.~\eqref{prf:middle-step} as,
        \vspace{-1mm}
        \[
        		\sum_{t=1}^{T} \alpha_t \ip{x_t - x_\ast}{g_t} \leq  \underbrace{ \frac{7}{2} \sum_{t=1}^{T} \left( \teta_{t+1} - \frac{1}{28 L^2 \teta_{t+1}} \right) \alpha_t^2 B_t^2 + \frac{7  D}{2} }_\textrm{(A)} + \underbrace{ \frac{7 D}{\sqrt{2}} \sqrt{ \sum_{t=1}^{T} \alpha_t^2 \norm{\xi_t}_\ast^2 }  }_\textrm{(B)}.
        \]
        Using Lemma~\ref{lem:technical1} and defining a time variable $\tau_\ast$ in the sense of Theorem~\ref{thm:rate-smooth-deterministic} (with correct constants), term (A) is upper bounded by $112 \sqrt{14} D^2 L$. By taking expectation conditioned on $\bar{x}_t$ and using Jensen's inequality, we could upper bound term (B) as $14 \sigma D T^{3/2} / \sqrt{2} $, which leads us to the optimal rate of ${224 \sqrt{14} D^2 L}/{T^2} + {14 \sqrt{2} \sigma D }/{\sqrt{ T}}$ through Lemma~\ref{lem:regret-to-rate}. 	
\end{proof}

%%%%%%%%%%%%%%%%%%%%%%%%%%%%%%%%%%%%%%%%%%%%%%%%%%%%%%%%%%%%%%%%%%%%%%%%%%%%%%%%%%%%%%%%%%%%%%
%%%%%%%%%%%%%%%%%%%%%%%%%%%%%%%%%%%%%%%%%%%%%%%%%%%%%%%%%%%%%%%%%%%%%%%%%%%%%%%%%%%%%%%%%%%%%%
%!TEX root = main.tex
%%%%%%%%%%%%%%%%%%%%%%%%%%%%%%%%%%%%%%%%%%%%%%%%%%%%%%%%%%%%%%%%%%%%%%%%%%%%%%%%%%%%%%%%%%%%%%
%%%%%%%%%%%%%%%%%%%%%%%%%%%%%%%%%%%%%%%%%%%%%%%%%%%%%%%%%%%%%%%%%%%%%%%%%%%%%%%%%%%%%%%%%%%%%%

\section{Experiments} \label{sec:experiments}

We compare performance of our algorithm for two different tasks against adaptive methods of various characteristics, such as AdaGrad, AMSGrad and AcceleGrad, along with a recent non-adaptive method AXGD. We consider a synthetic setting where we analyze the convergence behavior, as well as a SVM classification task on some LIBSVM dataset. In all the setups, we tuned the hyper-parameters of each algorithm by grid search. In order to compare the adaptive methods on equal grounds, AdaGrad is implemented with a scalar step size based on the template given by \citet{levy2017online}. We implement AMSGrad exactly as it is described by \citet{j.2018on}.

\subsection{Convergence behavior}

We take the least squares problem with $L_2$-norm ball constraint for this setting, i.e., $\min_{\norm{x}_2 < r} \frac{1}{2n} \norm{ Ax - b }_2^2$, where $A \in \mathbb R^{n \times d}$, $A \sim \mathcal N (0, \sigma^2 I)$ and $b = A x^\natural + \epsilon$ such that $\epsilon$ is a random vector $ \sim \mathcal N (0, 10^{-3})$. We pick $n = 500$ and $d = 100$. For the rest of this section, we refer to the solution of \emph{constrained} problem as $x_\ast$.

%\begin{figure}[ht]
%\begin{subfigure}{.49\textwidth}
%  \centering
%  % include first image
%  \includegraphics[width=.8\linewidth]{figs/ConvergenceRate_AvgIterate_All_experiment-type=LeastSquares-L2-LargeR-Deterministic_d=100.pdf}  
%  \caption{Average Iterate}
%  \label{fig:sub-lrg-det-avg}
%\end{subfigure}
%\begin{subfigure}{.49\textwidth}
%  \centering
%  % include second image
%  \includegraphics[width=.8\linewidth]{figs/ConvergenceRate_LastIterate_All_experiment-type=LeastSquares-L2-LargeR-Deterministic_d=100.pdf}  
%  \caption{Last Iterate}
%  \label{fig:sub-lrg-det-last}
%\end{subfigure}
%\caption{Convergence rates in the \textbf{deterministic} oracle setting when the minimizer of $f$ is \textbf{inside} the constraint set}
%\label{fig:synthetic-deterministic1}
%\end{figure}

\begin{figure}[H]
\centering
\begin{subfigure}{.49\textwidth}
  \centering
  % include first image
  \includegraphics[width=.8\linewidth]{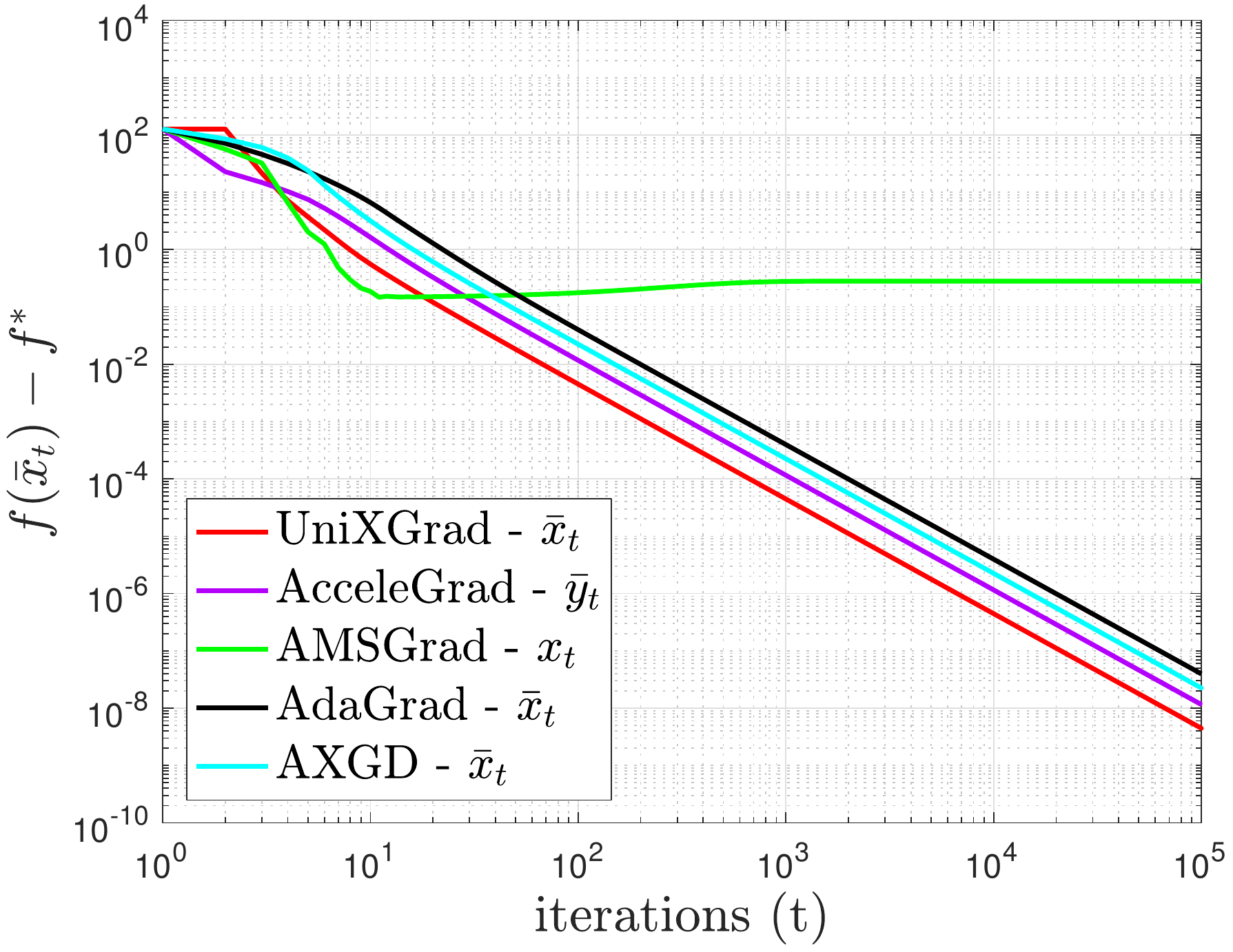}  
  \caption{Average Iterate}
  \label{fig:sub-sml-det-avg}
\end{subfigure}
\begin{subfigure}{.49\textwidth}
  \centering
  % include second image
  \includegraphics[width=.8\linewidth]{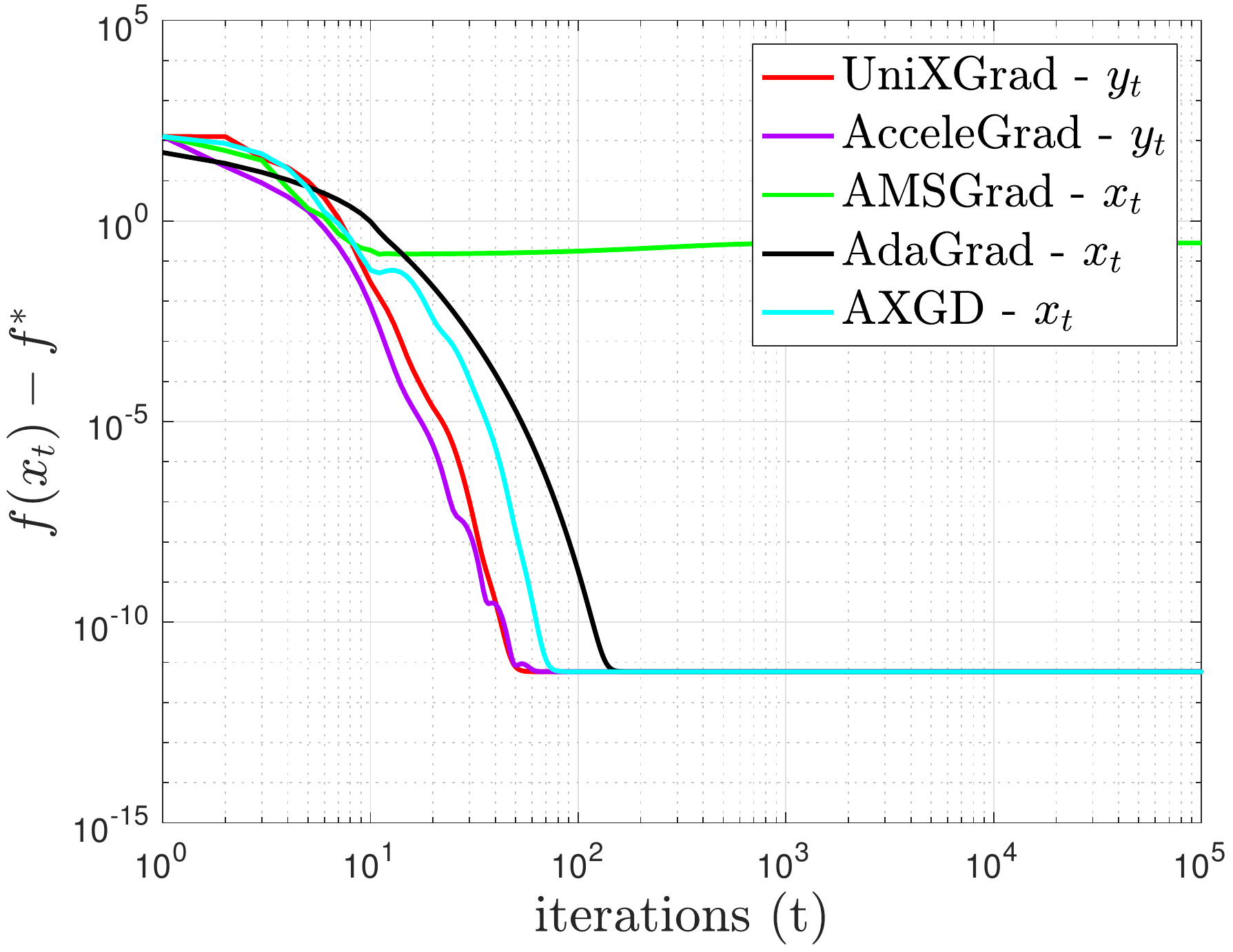}  
  \caption{Last Iterate}
  \label{fig:sub-sml-det-last}
\end{subfigure}
\caption{Convergence rates in the \textbf{deterministic} oracle setting when $x_\ast \in \text{Boundary}(\mathcal K)$}
\label{fig:synthetic-deterministic2}
\end{figure}

\begin{figure}[H]
\centering
\begin{subfigure}{.49\textwidth}
  \centering
  % include first image
  \includegraphics[width=.8\linewidth]{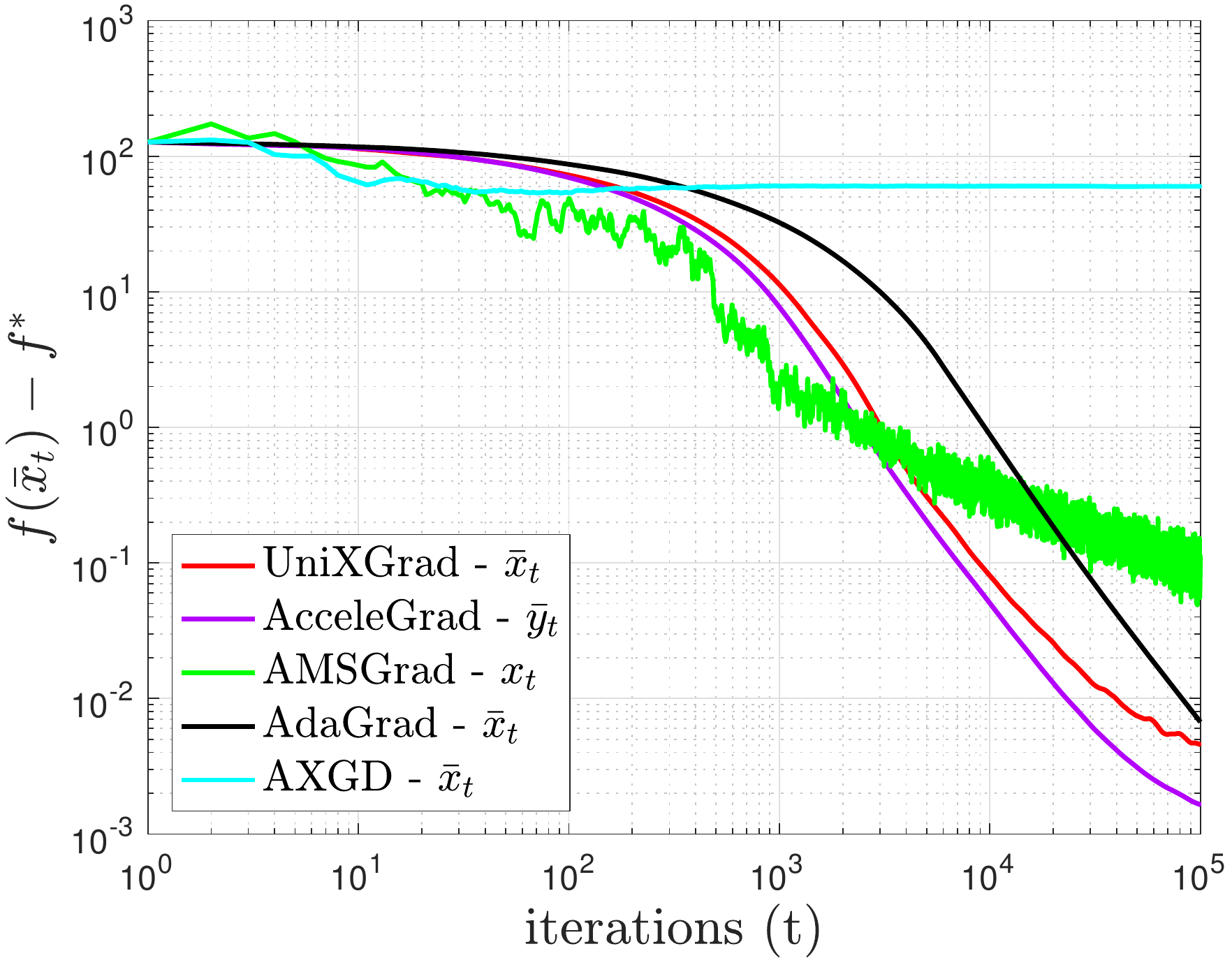}  
  \caption{Average Iterate}
  \label{fig:sub-sml-stc-avg}
\end{subfigure}
\begin{subfigure}{.49\textwidth}
  \centering
  % include second image
  \includegraphics[width=.8\linewidth]{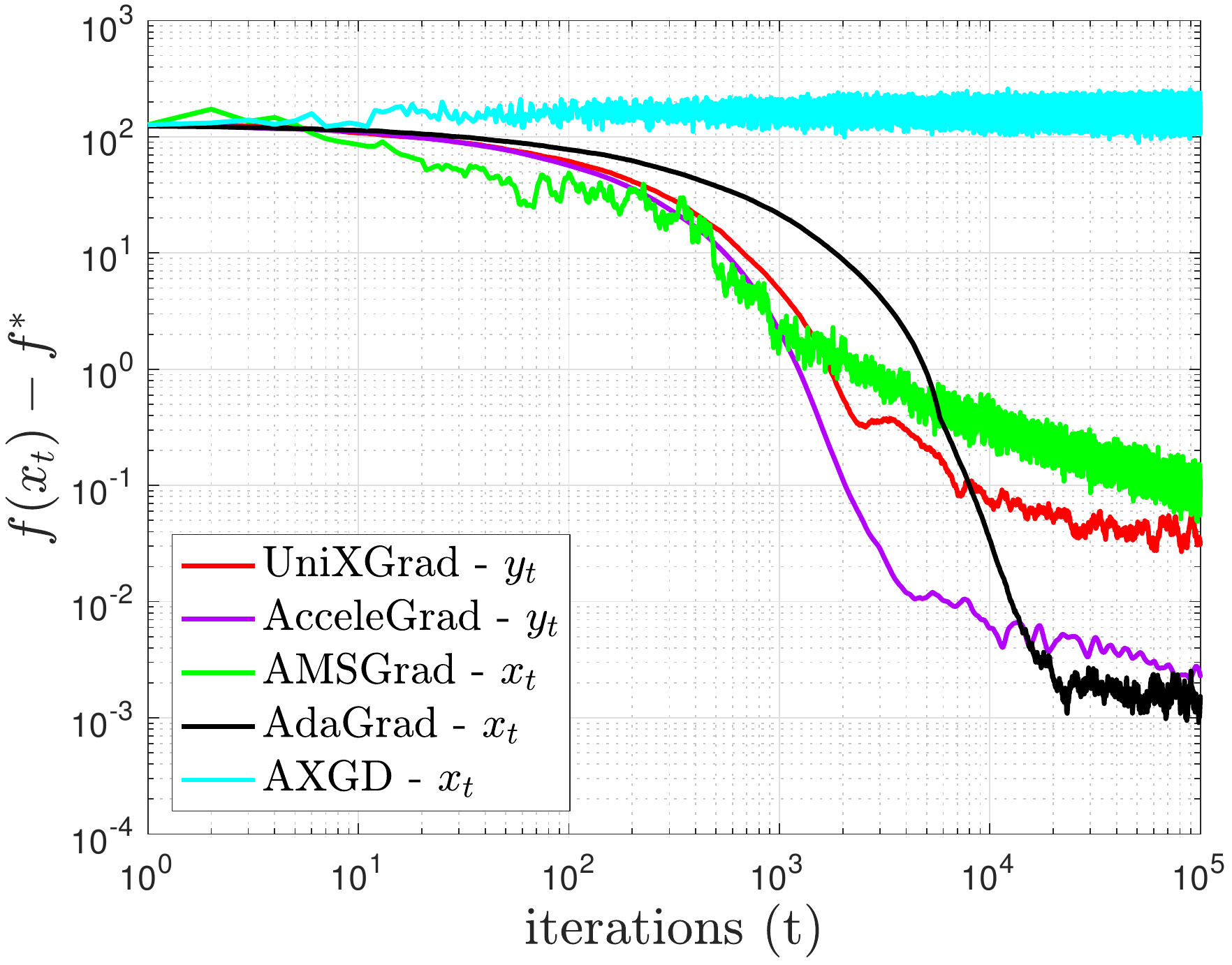}  
  \caption{Last Iterate}
  \label{fig:sub-sml-stc-last}
\end{subfigure}
\caption{Convergence rates in the \textbf{stochastic} oracle setting when $x_\ast \in \text{Boundary}(\mathcal K)$}
\label{fig:synthetic-stochastic2}
\end{figure}

\vspace{-3mm}
In Figure~\ref{fig:synthetic-deterministic2} and \ref{fig:synthetic-stochastic2}, we present the convergence rates under deterministic and stochastic oracles, and we pick a problem in which the solution is on the boundary of the constraint set, i.e., $x_\ast \in \text{Boundary}(\mathcal K)$. In this setting, our algorithm shows matching performance in comparison with other methods. AXGD has convergence issues in the stochastic setting, as it only handles additive noise and their step size routine does not seem to be compatible with stochastic gradients. Another key observation is that AMSGrad suffers a decrease in its performance when the solution is on the boundary of the set.

%\begin{figure}[ht]
%\begin{subfigure}{.49\textwidth}
%  \centering
%  % include first image
%  \includegraphics[width=.8\linewidth]{figs/ConvergenceRate_AvgIterate_All_experiment-type=LeastSquares-L2-LargeR-Stochastic_d=100.pdf}  
%  \caption{Average Iterate}
%  \label{fig:sub-lrg-stc-avg}
%\end{subfigure}
%\begin{subfigure}{.49\textwidth}
%  \centering
%  % include second image
%  \includegraphics[width=.8\linewidth]{figs/ConvergenceRate_LastIterate_All_experiment-type=LeastSquares-L2-LargeR-Stochastic_d=100.pdf}  
%  \caption{Last Iterate}
%  \label{fig:sub-lrg-stc-last}
%\end{subfigure}
%\caption{Convergence rates in the \textbf{stochastic} oracle setting when the minimizer of $f$ is \textbf{inside} the constraint set}
%\label{fig:synthetic-stochastic1}
%\end{figure}

%Figure~\ref{fig:synthetic-stochastic1} and \ref{fig:synthetic-stochastic2} present the convergence performance in stochastic setting, where we observe a slightly different scenario. When $x_{\text{opt}}$ is inside the constrained set, AdaGrad and AMSGrad outperform the universal, adaptive methods Accelegrad and UniOMD. Step size routine for AXGD, which is the non-universal and non-adaptive counterpart of our framework, is not compatible with the minibatch setting. When the deterministic gradients are corrupted with noise, AXGD maintains its performance, however, it is not robust to stochasticity due to mini batch randomness. In the case where $x_\ast \not= x_{\text{opt}}$, our method achieves comparable performance.

\subsection{SVM classification}

In this section, we will tackle SVM classification problem on ``breast-cancer'' data set taken from LIBSVM. We try to minimize squared Hinge loss with $L_2$ norm regularization. We split the data set as training and test sets with 80/20 ratio. The models are trained using random mini batches of size 5. Figure~\ref{fig:classification} demonstrates convergence rates and test accuracies of the methods. They represent the average performance of 5 runs, with random initializations. For UniXGrad, AcceleGrad and AXGD, we consider the performance with respect to the average iterate $\bar{x}_t$ as it shows a more stable behavior, whereas AdaGrad and AMSGrad are evaluated based on their last iterates. AXGD, which has poor convergence behavior in stochastic setting due to its step size rule, shows the worst performance both in terms of convergence and generalization. UniXGrad, AcceleGrad, AdaGrad and AMSGrad achieve comparable generalization performances to each other. AMSGrad achieves a slightly better performance as it has diagonal preconditioner which translates to per-coordinate learning rate. It could possibly adapt to the geometry of the optimization landscape better. 

\begin{figure}[H]
\centering
\begin{subfigure}{.49\textwidth}
  \centering
  % include first image
  \includegraphics[width=.9\linewidth]{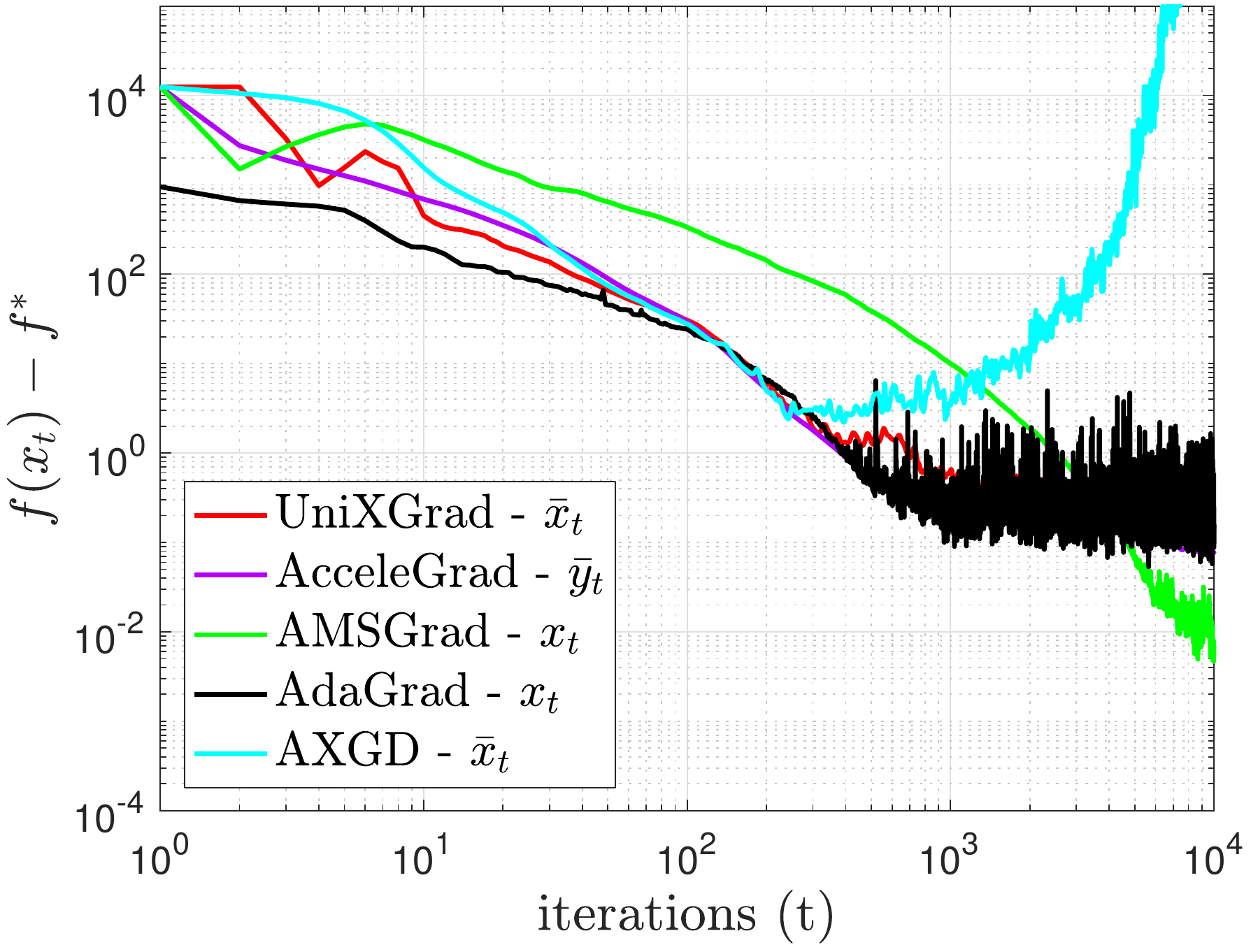}  
  \caption{Convergence rates with respect to training data}
  \label{fig:sub-brst-train}
\end{subfigure}
\begin{subfigure}{.49\textwidth}
  \centering
  % include second image
  \includegraphics[width=.9\linewidth]{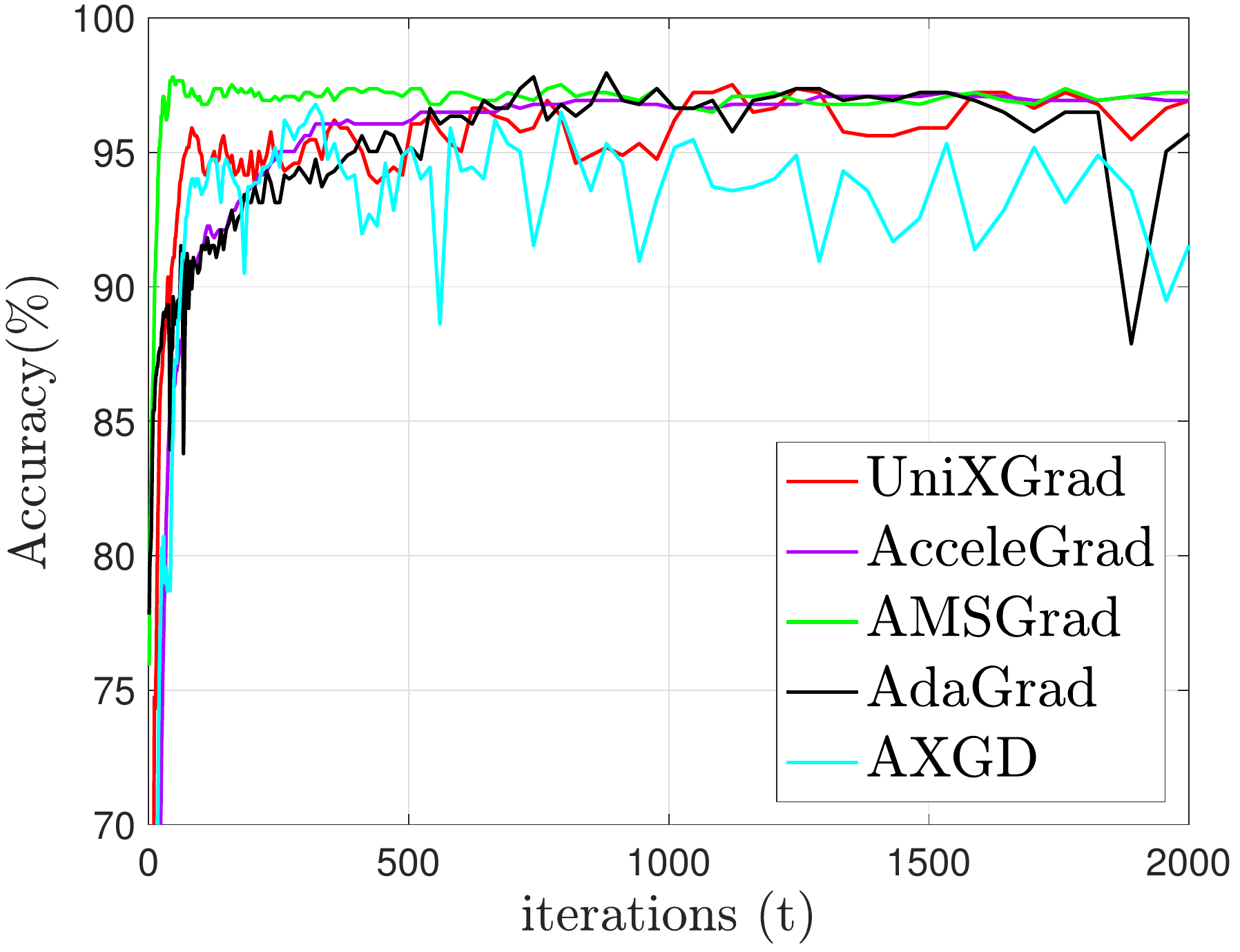}  
  \caption{Test Accuracy}
  \label{fig:sub-brst-test}
\end{subfigure}
\caption{Convergence behavior with respect to training data and resulting test accuracies for binary classification task on breast-cancer dataset from LIBSVM~\cite{libsvm}}
\label{fig:classification}
\end{figure}

%%%%%%%%%%%%%%%%%%%%%%%%%%%%%%%%%%%%%%%%%%%%%%%%%%%%%%%%%%%%%%%%%%%%%%%%%%%%%%%%%%%%%%%%%%%%%%
%%%%%%%%%%%%%%%%%%%%%%%%%%%%%%%%%%%%%%%%%%%%%%%%%%%%%%%%%%%%%%%%%%%%%%%%%%%%%%%%%%%%%%%%%%%%%%
%!TEX root = main.tex
%%%%%%%%%%%%%%%%%%%%%%%%%%%%%%%%%%%%%%%%%%%%%%%%%%%%%%%%%%%%%%%%%%%%%%%%%%%%%%%%%%%%%%%%%%%%%%
%%%%%%%%%%%%%%%%%%%%%%%%%%%%%%%%%%%%%%%%%%%%%%%%%%%%%%%%%%%%%%%%%%%%%%%%%%%%%%%%%%%%%%%%%%%%%%

\section{Discussion and Future Work} \label{sec:conclusion}
	In this paper we presented an adaptive and universal framework that achieves the optimal convergence rates in constrained convex optimization setting. To our knowledge, this is the first method that achieves $ \mathcal{O}\left(G D /\sqrt{T}\right)$ and $ \mathcal{O} \left(D^2 L/T^{2} + \sigma D /\sqrt{T}\right)$ rates in the constrained setting, without log dependencies. Without any prior information, our algorithm adapts to smoothness of the objective function as well as the variance of the possibly noisy gradients. 
	
	One would interpret that our guarantees are extensions of \citep{levy2018online} to the constrained setting, through a completely different algorithm and a simpler, classical analysis. Our study of their algorithm and proof strategies concludes that:
	\begin{itemize}
		\item It does not seem possible to remove $\log T$ dependency in non-smooth setting for their algorithm, due to their Lemma A.3
		\item Extending their algorithm to constrained setting (via projecting $y$ sequence) is not trivial, as the analysis requires $y$ sequence to be unbounded (refer to their Appendix A, Eq. (16)).
	\end{itemize}
	
	As a follow up to our work, we would like to investigate three main extensions: 
	
	\begin{itemize}
		\item Proximal version of our algorithm that could handle composite problems with nonsmooth terms, including indicator functions, in a unified manner. It seems like a rather simple extension as the main difference would be replacing optimality condition for constrained updates with that of proximal operator.
		\item Extending scalar adaptive learning rate to per-coordinate matrix-like preconditioner. This direction of research would help us create a robust algorithm that is applicable to non-convex problems, such as training deep neural networks.
		\item Adaptation to strong convexity along with smoothness and noise variance, simultaneously. A first step towards tackling this open problem is proving an improved rate of $O(1 / T^2 + \sigma / T)$ for smooth and strongly convex problems, with stochastic gradients. 
	\end{itemize}
	
\section*{Acknowledgment}
	AK and VC are supported by the European Research Council (ERC) under the European Union's Horizon 2020 research and innovation programme (grant agreement no 725594 - time-data) and the Swiss National Science Foundation (SNSF) under grant number 200021\_178865 / 1.

%%%%%%%%%%%%%%%%%%%%%%%%%%%%%%%%%%%%%%%%%%%%%%%%%%%%%%%%%%%%%%%%%%%%%%%%%%%%%%%%%%%%%%%%%%%%%%
%%%%%%%%%%%%%%%%%%%%%%%%%%%%%%%%%%%%%%%%%%%%%%%%%%%%%%%%%%%%%%%%%%%%%%%%%%%%%%%%%%%%%%%%%%%%%%
\bibliography{refs}
\bibliographystyle{abbrvnat}

\iftrue
\clearpage
\appendix
 {
	%!TEX root = main.tex
%%%%%%%%%%%%%%%%%%%%%%%%%%%%%%%%%%%%%%%%%%%%%%%%%%%%%%%%%%%%%%%%%%%%%%%%%%%%%%%%%%%%%%%%%%%%%%
%%%%%%%%%%%%%%%%%%%%%%%%%%%%%%%%%%%%%%%%%%%%%%%%%%%%%%%%%%%%%%%%%%%%%%%%%%%%%%%%%%%%%%%%%%%%%%

\section{Proof of regret-to-rate conversion}

First, we discuss a generic scheme that enables us to relate our weighted regret bounds to optimality gap, hence the convergence rate. Once again, note that our analysis borrows tools and techniques from online learning literature and applies them to offline optimization setup. In essence, our conversion scheme applies to a special setting, where the convex loss is fixed across iterations. Let us give the respective Lemma and its proof.

%So far, we have dealt with regret analysis for the MOMD algorithm, however, it is essentially the convergence rate that we are ultimately interested in. We can go from regret bounds to convergence rates with a few algebraic manipulations. Without loss of generality, let's concentrate on the smooth case. We have shown that
%
%\[
%	\text{Regret := } \sum_{t=1}^{T} \alpha_t \ip{x_t - x_\ast}{\gradf{\bar{x}_t}} \leq \mathcal O \br{1} 
%\]
%
%Observe that all iterates $x_t$ are feasible, i.e., $x_T \in \mathcal K \, , \, \forall t$, which implies that $\bar{x}_t = \frac{\sum_{i=1}^{t} \alpha_i x_i }{\sum_{i=1}^{t} \alpha_t} \in \mathcal K$ as it is a convex combination of feasible iterates. Hence, $f(\bar{x}_t) - f^\ast \geq 0$ for all $t \in 1, ..., T$.
%
%We now claim that there exists a quantity that lower bounds the regret, such that it directly translates to accelerated optimization bounds.
%
%Next, we prove our claim and give the optimization bounds we aimed in the first place:

\begin{replemma}{lem:regret-to-rate}
	Consider weighted average $\bar{x}_t$ as in Eq.~\eqref{prelim:avg-iterates}. Let $R_T(x_\ast) = \sum_{t=1}^{T} \alpha_t \ip{x_t - x_\ast}{g_t} $ denote the weighted regret after T iterations, $\alpha_t = t$ and $g_t = \nabla f(\bar{x}_t)$. Then,
	
	\[
		f(\bar{x}_T) - f(x_\ast) \leq \frac{2 R_T(x_\ast)}{T^2}.
	\]
\end{replemma}

\begin{proof}

Let's define $A_t = \sum_{i=1}^{t} \alpha_i$. Then, by definition, we could express $x_t$ as

\begin{align} \label{def:A-t}
	x_t = \frac{A_t}{\alpha_t} \bar{x}_t - \frac{A_{t-1}}{\alpha_t} \bar{x}_{t-1}.
\end{align}

Then, use Eq.~\eqref{def:A-t} and replace $g_t$ by $\nabla f(\bar{x}_t)$ in the weighted regret expression, i.e.

\begin{align*}
	\sum_{t=1}^{T} \alpha_t \ip{x_t - x_\ast}{\gradf{\bar{x}_t}} &= \sum_{t=1}^{T} \alpha_t \ip{ \frac{A_t}{\alpha_t} \bar{x}_t - \frac{A_{t-1}}{\alpha_t} \bar{x}_{t-1} - x_\ast}{\gradf{\bar{x}_t} } \\
	&= \sum_{t=1}^{T} \alpha_t \ip{ \frac{A_t}{\alpha_t} \br{\bar{x}_t - x_\ast} - \frac{A_{t-1}}{\alpha_t} \br{\bar{x}_{t-1} - x_\ast}}{\gradf{\bar{x}_t} } \\
	&= \sum_{t=1}^{T} A_t \ip{ \bar{x}_t - x_\ast }{\gradf{\bar{x}_t} } - A_{t-1} \ip{\bar{x}_{t-1} - x_\ast}{\gradf{\bar{x}_t} }\\
	&= \sum_{t=1}^{T} \br{\sum_{i=1}^{t} \alpha_i \ip{ \bar{x}_t - x_\ast }{\gradf{\bar{x}_t} } } - \br{\sum_{i=1}^{t-1} \alpha_i \ip{\bar{x}_{t-1} - x_\ast}{\gradf{\bar{x}_t} }}\\
	&= \sum_{t=1}^{T} \alpha_t \ip{ \bar{x}_t - x_\ast }{\gradf{\bar{x}_t} } + \sum_{t=1}^{T}\sum_{i=1}^{t-1} \alpha_i \ip{\bar{x}_t - \bar{x}_{t-1}}{\gradf{\bar{x}_t}} \\
	&\geq \sum_{t=1}^{T} \alpha_t \br{ f(\bar{x}_t) - f(x_\ast) } + \sum_{t=1}^{T} \sum_{i=1}^{t-1} \alpha_i \br{ f(\bar{x}_t) - f( \bar{x}_{t-1} ) },
\end{align*}

where we used gradient inequality in the last line. We also take $\alpha_0 = 0$ and $A_0 = 0$. Then, we telescope the double summation and reorganize the terms

\begin{align*}
	&= \sum_{t=1}^{T} \alpha_t \br{ f(\bar{x}_t) - f(x_\ast) } + \sum_{t=1}^{T-1} \alpha_t \br{ f( \bar{x}_T ) - f( \bar{x}_t ) } \\
	& = \alpha_T \br{ f( \bar{x}_T ) - f( x_\ast ) } + \sum_{t=1}^{T-1} \alpha_t \br{ f(\bar{x}_t) - f(x_\ast) + f( \bar{x}_T ) - f( \bar{x}_t ) } \\
	&= \sum_{t=1}^{T} \alpha_t \br{ f( \bar{x}_T ) - f( x_\ast ) }.
\end{align*}

Having simplified the expression, we divide both sides by $A_T$ and conclude the proof. Observe that $A_T \geq \frac{T^2}{2}$, hence,

\begin{align*}
	\sum_{t=1}^{T} \alpha_t \br{ f( \bar{x}_T ) - f( x_\ast ) } &\leq \sum_{t=1}^{T} \alpha_t \ip{x_t - x_\ast}{\gradf{\bar{x}_t}} \\
	\frac{1}{A_T} \sum_{t=1}^{T} \alpha_t \br{ f( \bar{x}_T ) - f( x_\ast ) } &\leq \frac{1}{{A_T}} \sum_{t=1}^{T} \alpha_t \ip{x_t - x_\ast}{\gradf{\bar{x}_t}} \\
	f( \bar{x}_T ) - f( x_\ast ) &\leq \frac{2 R_T(x_\ast)}{T^2}.
\end{align*}

\end{proof}

\section{Proofs for the non-smooth setting}

As we have mentioned previously, for the weighted regret analysis in the non-smooth case, i.e., $f$ is only $G$-Lipschitz, please observe that we do not exploit the precise definitions of $g_t$ and $M_t$. As far as the regret analysis is concerned, their dual norm should be bounded. However, we especially rely on the fact that $g_t = \nabla f(\bar{x}_t)$ since it is necessary to obtain converge rates from regret-like bounds using Lemma~\ref{lem:regret-to-rate}.

Let us bring up the following relation which we will require for the regret analysis of both smooth and non-smooth objective. 

\begin{replemma}{lem:technical1} \label{replem:technical1}
	Let $\bc{a_i}_{i=1, ..., n}$ be a sequence of non negative numbers. Then, it holds that
	
	\[
		\sqrt{ \sum_{i=1}^{n} a_i } \leq \sum_{i=1}^{n} \frac{ a_i }{\sum_{j=1}^{i} a_j } \leq 2 \sqrt{ \sum_{i=1}^{n} a_i }.
	\]
	
\end{replemma}

Please refer to \citep{levy2018online, mcmahan2010adaptive} for the proof of Lemma~\ref{lem:technical1}, which is due to induction. We will also make use of the following bound (due to Young's Inequality)

\vspace{-3mm}
\begin{align} \label{def:gen-youngs-ineq}
	\alpha_t \norm{g_t - M_t}_\ast \norm{x_t - y_t} = \inf_{\rho > 0} \bc{ \frac{\rho}{2} \norm{g_t - M_t}_{\ast}^2 + \frac{\alpha_t^2}{2 \rho} \norm{x_t - y_t}^2 }.
\end{align}

\subsection{Proof of Theorem~\ref{thm:rate-nonsmooth-deterministic}}

\begin{reptheorem}{thm:rate-nonsmooth-deterministic} \label{repthm:rate-nonsmooth-deterministic}
	Consider the constrained optimization setting in Problem~(\ref{problem-def}), where $f: \mathcal K \rightarrow \mathbb R$ is a proper, convex and $G$-Lipschitz function defined over compact, convex set $\mathcal K$. Let $x^\ast \in \min_{x \in \mathcal K} f(x)$. Then, Algorithm~\ref{alg:UniXGrad} guarantees
	
	\vspace{-3mm}
	\begin{align}
		f( \bar{x}_T ) - \min_{x \in \mathcal K} f(x) \leq \frac{7 D \sqrt{ 1 + \sum_{t=1}^{T} \alpha_t^2 \norm{ g_t - M_t }_\ast^2 } - D}{T^2} \leq \frac{6 D}{T^2} + \frac{14 G D}{\sqrt{T}}.
	\end{align}
\end{reptheorem}

\begin{proof}
	\begin{align*}
		\sum_{t=1}^{T} \alpha_t \ip{x_t - x_\ast}{g_t} =& \sum_{t=1}^{T} \underbrace{ \alpha_t \ip{x_t - y_t}{g_t - M_t} }_\textrm{(A)} + \underbrace{ \alpha_t \ip{x_t - y_t}{M_t} }_\textrm{(B)} + \underbrace{ \alpha_t \ip{y_t - x^\ast}{g_t} }_\textrm{(C)}.
	\end{align*}
	
	\paragraph{Bounding (A)}
	\begin{align*}
			\sum_{t=1}^{T} \alpha_t \ip{x_t - y_t}{g_t - M_t} \leq& \sum_{t=1}^{T} \alpha_t \norm{g_t - M_t}_\ast \norm{x_t - y_t} \quad \text{(H{\"o}lder's Inequality)}\\
			\leq& \sum_{t=1}^{T} \frac{\rho}{2} \norm{g_t - M_t}_{\ast}^2 + \frac{\alpha_t^2}{2 \rho} \norm{x_t - y_t}^2 \quad \text{(Equation~(\ref{def:gen-youngs-ineq}))}.
	\end{align*}
		
	By setting $\rho = \alpha_t^2 \eta_{t+1}$, we get the following upper bound for term (A),
	\begin{align*}
			\sum_{t=1}^{T} \alpha_t \ip{x_t - y_t}{g_t - M_t} \leq \sum_{t=1}^{T} \frac{\alpha_t^2 \eta_{t+1}}{2} \norm{g_t - M_t}_{\ast}^2 + \frac{1}{2 \eta_{t+1}} \norm{x_t - y_t}^2
	\end{align*}	
	
	\paragraph{Bounding (B)}
	\begin{align*}
			\sum_{t=1}^{T} \alpha_t \ip{x_t - y_t}{M_t} \leq& \sum_{t=1}^{T} \frac{1}{\eta_t} \nabla_x D_{\mathcal R}(x_t, y_{t-1})^T (y_t - x_t) \quad (\text{Optimality for } x_t) \\
			=& \sum_{t=1}^{T} \frac{1}{\eta_t} \br{ D_{\mathcal R}(y_t, y_{t-1}) - D_{\mathcal R}(x_t, y_{t-1}) - D_{\mathcal R}(y_t, x_t) }.
	\end{align*}

	\paragraph{Bounding (C)}
	\begin{align*}
			\sum_{t=1}^{T} \alpha_t \ip{y_t - x^\ast}{g_t} \leq& \sum_{t=1}^{T} \frac{1}{\eta_t} \nabla_x D_{\mathcal R}(y_t, y_{t-1})^T (x^\ast - y_t) \quad (\text{Optimality for } y_t)\\
			=& \sum_{t=1}^{T} \frac{1}{\eta_t} \br{ D_{\mathcal R}(x^\ast, y_{t-1}) - D_{\mathcal R}(y_t, y_{t-1}) - D_{\mathcal R}(x^\ast, y_t) }.
	\end{align*}
	
	\paragraph{Final Bound}
	
	\begin{align*}
			\sum_{t=1}^{T} \alpha_t \ip{x_t - x_\ast}{g_t} \leq& \,  \sum_{t=1}^{T} \frac{\alpha_t^2 \eta_{t+1}}{2} \norm{g_t - M_t}_{\ast}^2 + \frac{1}{2 \eta_{t+1}} \norm{x_t - y_t}^2 \\
			&+ \frac{1}{\eta_t} \br{ D_{\mathcal R}(x^\ast, y_{t-1}) - D_{\mathcal R}(x^\ast, y_t) - D_{\mathcal R}(x_t, y_{t-1}) - D_{\mathcal R}(y_t, x_t) } \\
			\leq& \, \sum_{t=1}^{T} \frac{\alpha_t^2 \eta_{t+1}}{2} \norm{g_t - M_t}_{\ast}^2 + \frac{1}{2 \eta_{t+1}} \norm{x_t - y_t}^2 \\
			&+ \frac{1}{\eta_t} \br{ D_{\mathcal R}(x^\ast, y_{t-1}) - D_{\mathcal R}(x^\ast, y_t) - \frac{1}{2} \br{ \norm{x_t - y_{t}}^2 + \norm{x_t - y_{t-1}}^2 } } \\
			\leq& \, \sum_{t=1}^{T} \frac{\alpha_t^2 \eta_{t+1}}{2} \norm{g_t - M_t}_{\ast}^2 + \sum_{t=1}^{T-1} \br{ \frac{1}{\eta_{t+1}} - \frac{1}{\eta_t} } D_{\mathcal R}(x^\ast, y_t) \\
			& + \sum_{t=1}^{T} \br{ \frac{1}{\eta_{t+1}} - \frac{1}{\eta_t} } \norm{x_t - y_t}^2  + \frac{1}{\eta_1} D^2\\
			\leq& \, \sum_{t=1}^{T} \frac{\alpha_t^2 \eta_{t+1}}{2} \norm{g_t - M_t}_{\ast}^2 + \sum_{t=1}^{T} \br{ \frac{1}{\eta_{t+1}} - \frac{1}{\eta_t} } \norm{x_t - y_t}^2 + \frac{D^2}{\eta_T} + \frac{D}{2}  \\
			\leq& \, \sum_{t=1}^{T} \frac{\alpha_t^2 \eta_{t+1}}{2} \norm{g_t - M_t}_{\ast}^2 + D^2 \br{ \frac{2}{\eta_{T+1}} + \frac{1}{\eta_T} } + \frac{ D}{2} \\
			\leq& \, D \sum_{t=1}^{T} \frac{\alpha_t^2 \norm{ g_t - M_t }_\ast^2}{\sqrt{ 1 + \sum_{i=1}^{t} \alpha_t^2 \norm{g_t - M_t}_\ast^2 }} + \frac{3}{2} D \sqrt{ 1 + \sum_{t=1}^{T} \alpha_t^2 \norm{ g_t - M_t }_\ast^2 } + \frac{ D}{2} \\
			\leq& \, \frac{7}{2} D \sqrt{ 1 + \sum_{t=1}^{T} \alpha_t^2 \norm{ g_t - M_t }_\ast^2 } - \frac{D}{2}\\
			\leq& \, 3  D + 7G D \sqrt{ \sum_{t=1}^{T} \alpha_t^2} \\
			\leq& \, 3 D + 7G D T^{3/2}.
	\end{align*}
	
	We obtain the rate by applying Lemma~\ref{lem:regret-to-rate} to the weighted regret bound above.
	
\end{proof}

\subsection{Proof of Theorem~\ref{thm:rate-nonsmooth-stochastic}}

\begin{reptheorem}{thm:rate-nonsmooth-stochastic} \label{repthm:rate-nonsmooth-stochastic}
	Consider the optimization setting in Problem~(\ref{problem-def}), where $f$ is non-smooth, convex and $G$-Lipschitz. Let $\{ x_t \}_{t=1,..,T}$ be a sequence generated by Algorithm~\ref{alg:UniXGrad} such that $g_t = \tilde{\nabla} f(\bar{x}_t)$ and $M_t = \tilde{\nabla} f(\tilde{z}_t)$. With $\alpha_t = t$ and learning rate as in \eqref{prelim:learning-rate}, it holds that
	
	\[
		\mathbb E \left[ f(\bar{x}_T) \right] - \min\limits_{x \in \mathcal K} f(x) \leq \frac{6 D}{T^2} + \frac{14 G D}{\sqrt{T}}.
	\]
	
\end{reptheorem}

\begin{proof}
	Similar to $\nabla f(x) \leftrightarrow \tilde{\nabla} f(x)$ notation, $\tilde{g}_t$ denotes a stochastic but unbiased estimate of $g_t$ for any $t \in [0, .., T]$. Also note that $x^\ast \in \min_{x \in \mathcal K} f(x)$. We start with weighted regret bound,
	
	\[
		R_T(x_\ast) = \sum_{t=1}^{T} \alpha_t \ip{x_t - x^\ast}{g_t}.
	\]
	
	We separate $g_t$ as $\tilde{g}_t + (g_t - \tilde{g}_t)$ and re-write the above term as
	
	\[
		\sum_{t=1}^{T} \alpha_t \ip{x_t - x^\ast}{g_t} = \underbrace{ \sum_{t=1}^{T} \alpha_t \ip{x_t - x^\ast}{\tilde{g_t}} }_\textrm{(A)} + \underbrace{ \sum_{t=1}^{T} \alpha_t \ip{x_t - x^\ast}{g_t - \tilde{g_t}} }_\textrm{(B)}.
	\]
	
	Due to unbiasedness of the gradient estimates, expected value of $\alpha_t \ip{x_t - x^\ast}{g_t - \tilde{g_t}}$, conditioned on the average iterate $\bar{x}_t$ evaluates to 0. We will only need to bound the first summation whose analysis is identical to its deterministic counterpart up to replacing $g_t$ with $\tilde{g}_t$, and $M_t$ with $\tilde{M}_t$. Hence, term (A) is upper bounded by $6 D + 14 G D T^{3/2}$.
	
	In addition to the setup in the deterministic setting, we put forth the assumption that stochastic gradients have bounded norms, which is natural in the constrained optimization framework. Using Lemma~\ref{lem:regret-to-rate}, we translate the regret bound into the convergence rate, i.e,
	\[
	\mathbb E \left[ f(\bar{x}_T) \right] - \min\limits_{x \in \mathcal K} f(x) \leq \frac{6 D}{T^2} + \frac{14 G D}{\sqrt{T}}.
	\]
	
\end{proof}

\section{Proofs for the smooth setting}

We will now introduce an additional assumption that $f$ is $L$-smooth (see Eq.~\eqref{prelim:L-smooth}). In this section, we provide the weighted regret analysis for smooth functions in the presence of deterministic and stochastic oracles and convert these bound into suboptimality gap via our regret-to-rate scheme.

\subsection{Proof of Theorem~\ref{thm:rate-smooth-deterministic}}

\begin{reptheorem}{thm:rate-smooth-deterministic} \label{repthm:rate-smooth-deterministic}
	Consider the constrained optimization setting in Problem~(\ref{problem-def}), where $f: \mathcal K \rightarrow \mathbb R$ is a proper, convex and $L$-smooth function defined over compact, convex set $\mathcal K$. Let $x^\ast \in \min_{x \in \mathcal K} f(x)$. Then, Algorithm~\ref{alg:UniXGrad} ensures the following
		
	\vspace{-3mm}
	\begin{align}
		f(\bar{x}_T) - \min\limits_{x \in \mathcal K} f(x) \leq \frac{20 \sqrt{7} D^2 L}{T^2}.
	\end{align}
\end{reptheorem}

\begin{proof} 

Recall the regret analysis for the non-smooth, convex objective

\begin{align*}
	R_T(x_\ast) \leq& \; \frac{1}{2} \sum_{t=1}^{T} \eta_{t+1} \alpha_t^2 \norm{ g_t - M_t }_\ast^2 + \frac{1}{\eta_{t+1}} \norm{ x_t - y_{t} }^2 \\
	&+ \sum_{t=1}^{T} \frac{1}{\eta_t} \br{ D_{\mathcal R}(x^\ast, y_{t-1}) - D_{\mathcal R}(x^\ast, y_t) - \frac{1}{2} \br{ \norm{x_t - y_{t}}^2 + \norm{x_t - y_{t-1}}^2 } } \\
	\leq& \; \frac{1}{2} \sum_{t=1}^{T} \eta_{t+1} \alpha_t^2 \norm{ g_t - M_t }_\ast^2 + \frac{1}{2} \sum_{t=1}^{T} \br{ \frac{1}{\eta_{t+1}} - \frac{1}{\eta_t} } \norm{x_t - y_t}^2 + \sum_{t=1}^{T-1} \br{ \frac{1}{\eta_{t+1}} - \frac{1}{\eta_t} } D_{\mathcal R}(x^\ast, y_{t}) \\
	&- \frac{1}{2} \sum_{t=1}^{T} \frac{1}{\eta_t} \norm{x_t - y_{t-1}}^2 + \frac{D^2}{\eta_1} \\
	=& \; \frac{1}{2} \sum_{t=1}^{T} \eta_{t+1} \alpha_t^2 \norm{ g_t - M_t }_\ast^2 + \frac{1}{2} \sum_{t=1}^{T} \br{ \frac{1}{\eta_{t+1}} - \frac{1}{\eta_t} } \norm{x_t - y_t}^2 + \frac{1}{2} \sum_{t=1}^{T} \br{ \frac{1}{\eta_{t+1}} - \frac{1}{\eta_t} } \norm{x_t - y_{t-1}}^2\\
	&+ \sum_{t=1}^{T-1} \br{ \frac{1}{\eta_{t+1}} - \frac{1}{\eta_t} } D_{\mathcal R}(x^\ast, y_{t}) - \frac{1}{2} \sum_{t=1}^{T} \frac{1}{\eta_{t+1}} \norm{x_t - y_{t-1}}^2 + \frac{D^2}{\eta_1} \\
	\leq& \;  \frac{1}{2} \sum_{t=1}^{T} \eta_{t+1} \alpha_t^2 \norm{ g_t - M_t }_\ast^2 - \frac{1}{2} \sum_{t=1}^{T} \frac{1}{\eta_{t+1}} \norm{x_t - y_{t-1}}^2 + D^2 \br{ \frac{2}{\eta_{T+1}} + \frac{1}{\eta_T} + \frac{1}{\eta_1} }.
\end{align*}

The key challenge in this analysis is to exploit the negative term, i.e., $- \frac{1}{2} \sum_{t=1}^{T} \frac{1}{\eta_{t+1}} \norm{x_t - y_{t-1}}^2$, such that we could tighten the regret bound from non-smooth analysis. Using the smoothness of $f$ and that $\alpha_t = t$, $A_t = \sum_{i=1}^{t} \alpha_t$, $g_t = \gradf{ \xbar{t} }$ and $M_t = \gradf{ \ztilde{t} }$

\vspace{-3mm}

\begin{align*}
	\norm{g_t - M_t}_\ast^2 &\leq \frac{L^2 \alpha_t^2}{A_t^2} \norm{x_t - y_{t-1}}^2 \\
	&= \frac{4 L^2 t^2}{t^2 (t+1)^2} \norm{x_t - y_{t-1}}^2 \\
	&= \frac{4 L^2 }{\alpha_{t+1}^2} \norm{x_t - y_{t-1}}^2 \\
	&\leq \frac{4 L^2 }{\alpha_t^2} \norm{x_t - y_{t-1}}^2.
\end{align*}

\vspace{-3mm}

Hence,

\vspace{-3mm}

\begin{align*}
	- \frac{1}{\eta_{t+1}} \norm{x_t - y_{t-1}}^2 \leq - \frac{\alpha_t^2}{4 L^2 \eta_{t+1}} \norm{g_t - M_t}_\ast^2.
\end{align*}

After applying this upper bound and regrouping the terms we have

\begin{align*}
	R_T(x_\ast) &\leq \frac{1}{2} \sum_{t=1}^{T} \br{ \eta_{t+1} - \frac{1}{4L^2 \eta_{t+1}} } \alpha_t^2 \norm{g_t - M_t}_\ast^2 + D^2 \br{ \frac{2}{\eta_{T+1}} + \frac{1}{\eta_T} + \frac{1}{\eta_1} }.
\end{align*}

Define that $\tau^\ast = \max \bc{ t \in \bc{ 1, ..., T } \, : \, \frac{1}{\eta_{t+1}^2} \leq 7 L^2}$ such that $\forall t > \tau^\ast$, $\eta_{t+1} - \frac{1}{4 L^2 \eta_{t+1}} \leq -\frac{3}{4} \eta_{t+1} $. We can rewrite the above term as

\begin{align*}
	R_T(x_\ast) &\leq \frac{1}{2} \br{ \sum_{t=1}^{\tau^\ast} \br{ \eta_{t+1} - \frac{1}{4L^2 \eta_{t+1}} } \alpha_t^2 \norm{g_t - M_t}_\ast^2 + \sum_{t=\tau^\ast + 1}^{T} \br{ \eta_{t+1} - \frac{1}{4L^2 \eta_{t+1}} } \alpha_t^2 \norm{g_t - M_t}_\ast^2 }\\
	& \quad+ \frac{3 D^2}{\eta_{T+1}} + \frac{D^2}{\eta_1} \\
	& \leq \underbrace{ \frac{1}{2} \sum_{t=1}^{\tau^\ast} \eta_{t+1} \alpha_t^2 \norm{g_t - M_t}_\ast^2 + \frac{ D}{2}}_\textrm{(A)} + \underbrace{ \frac{3 D^2}{\eta_{T+1}} - \frac{3}{4} \sum_{t=\tau^\ast + 1}^{T} \eta_{t+1} \alpha_t^2 \norm{g_t - M_t}_\ast^2 }_\textrm{(B)}.
\end{align*}

\paragraph{Bounding (A): } We will simply need to use the definition of $\tau^\ast$ and Lemma~\ref{lem:technical1}

\begin{align*}
	\frac{1}{2} \sum_{t=1}^{\tau^\ast} \eta_{t+1} \alpha_t^2 \norm{g_t - M_t}_\ast^2 + \frac{ D}{2}&= D \sum_{t=1}^{\tau^\ast} \frac{ \alpha_t^2 \norm{g_t - M_t}_\ast^2 }{ \sqrt{ 1 + \sum_{i=1}^{t} \alpha_i^2 \norm{g_i - M_i}_\ast^2 } } + \frac{ D}{2}\\
	& \leq 2 D \sqrt{ 1 + \sum_{t=1}^{\tau^\ast} \alpha_t^2 \norm{g_t - M_t}_\ast^2 } \\
	&= \frac{4 D^2}{\eta_{\tau^\ast + 1}} \\
	&\leq 4 \sqrt{7} D^2 L.
\end{align*}

\paragraph{Bounding (B): }

\begin{align*}
	\text{(B)} &\leq \frac{3 D}{2} \br{  \sqrt{ 1 + \sum_{t=1}^{T} \alpha_t^2 \norm{ g_t - M_t } _\ast^2 } - \sum_{t=\tau^\ast + 1}^{T} \frac{\alpha_t^2 \norm{g_t - M_t}_\ast^2}{\sqrt{ 1 + \sum_{i=1}^{t} \alpha_i^2 \norm{g_i - M_i}_\ast^2 }} } \\
	&\leq  \frac{3 D }{2} + \frac{3 D}{2} \br{ \sum_{t=1}^{T} \frac{\alpha_t^2 \norm{g_t - M_t}_\ast^2}{\sqrt{ 1 + \sum_{i=1}^{t} \alpha_i^2 \norm{g_i - M_i}_\ast^2 }}   - \sum_{t=\tau^\ast + 1}^{T} \frac{\alpha_t^2 \norm{g_t - M_t}_\ast^2}{\sqrt{ 1 + \sum_{i=1}^{t} \alpha_i^2 \norm{g_i - M_i}_\ast^2 }} } \\
	&\leq \frac{3 D }{2} + \frac{3 D}{2} \sum_{t=1}^{\tau^\ast} \frac{\alpha_t^2 \norm{g_t - M_t}_\ast^2}{\sqrt{ 1 + \sum_{i=1}^{t} \alpha_i^2 \norm{g_i - M_i}_\ast^2 }} \\
	&\leq 3 D \sqrt{ 1 + \sum_{t=1}^{\tau^\ast} \alpha_i^2 \norm{g_i - M_i}_\ast^2 } \\
	&= \frac{6 D^2}{\eta_{\tau^\ast + 1}} \\
	&\leq 6 \sqrt{7} D^2 L.
\end{align*}

\paragraph{Final Bound: } What remains is to simply bring the term (A) and (B) together.

\begin{align*}
	R_T(x_\ast) &\leq \frac{1}{2} \sum_{t=1}^{\tau^\ast} \eta_{t+1} \alpha_t^2 \norm{g_t - M_t}_\ast^2  + \frac{ D}{2} + \frac{3 D^2}{\eta_{T+1}} - \frac{3}{4} \sum_{t=\tau^\ast + 1}^{T} \eta_{t+1} \alpha_t^2 \norm{g_t - M_t}_\ast^2 \\
	&\leq 10 \sqrt{7} D^2 L.
\end{align*}

We conclude the proof by applying Lemma~\ref{lem:regret-to-rate} and get $f(\bar{x}_T) - \min\limits_{x \in \mathcal K} f(x) \leq \frac{20 \sqrt{7} D^2 L}{T^2}$.

\end{proof}

\subsection{Proof of Theorem~\ref{thm:rate-smooth-stochastic}}

In this setting, we will make an additional, but classical, bounded variance assumption on the stochastic gradient oracles. Recall the bounded variance assumption in Eq. \eqref{def:bounded-variance} and let us define $\xi_t = ( \tilde{g}_t - \tilde{M}_t ) - \left( g_t - M_t \right)$. Since $\| \xi_t \|_\ast^2 \leq 2\| \tilde{g}_t - g_t \|_\ast^2 + 2 \| \tilde{M}_t - M_t \|_\ast^2$, we can write,
\begin{align} \label{def:bounded-variance-xi}
	\mathbb E \left[ \| \xi_t\|_\ast^2 | \bar{x}_t \right] \leq 4 \sigma^2.
\end{align}

Next, we will present our final convergence theorem.

\begin{reptheorem}{thm:rate-smooth-stochastic}\label{repthm:rate-smooth-stochastic}
	Consider the optimization setting in Problem~(\ref{problem-def}), where $f$ is $L$-smooth and convex. Let $\{ x_t \}_{t=1,..,T}$ be a sequence generated by Algorithm~\ref{alg:UniXGrad} such that $g_t = \tilde{\nabla} f(\bar{x}_t)$ and $M_t = \tilde{\nabla} f(\tilde{z}_t)$. With $\alpha_t = t$ and learning rate as in (\ref{prelim:learning-rate}), it holds that
	
	\[
		\mathbb E \left[ f(\bar{x}_T) \right] - \min\limits_{x \in \mathcal K} f(x) \leq \frac{224 \sqrt{14} D^2 L}{T^2} + \frac{14\sqrt{2} \sigma D }{\sqrt{ T}}.
	\]
	
\end{reptheorem}

\begin{proof}
	We start out with weighted regret, the same way as in Theorem~\ref{thm:rate-nonsmooth-stochastic}
	
	\[
		\sum_{t=1}^{T} \alpha_t \ip{x_t - x^\ast}{g_t} \leq \underbrace{ \sum_{t=1}^{T} \alpha_t \ip{x_t - x^\ast}{\tilde{g_t}} }_\textrm{(A)} + \underbrace{ \sum_{t=1}^{T} \alpha_t \ip{x_t - x^\ast}{g_t - \tilde{g_t}} }_\textrm{(B)}.
	\]
	
	We already know that term (B) is zero in expectation. Following the proof steps of Theorem~\ref{thm:rate-nonsmooth-stochastic}, we could upper bound term (A) as
	
	\begin{align*}
		&\leq  \frac{1}{2} \sum_{t=1}^{T} \eta_{t+1} \alpha_t^2 \| \tilde{g}_t - \tilde{M}_t \|_\ast^2 - \frac{1}{2} \sum_{t=1}^{T} \frac{1}{\eta_{t+1}} \norm{x_t - y_{t-1}}^2 + D^2 \br{ \frac{3}{\eta_{T+1}} + \frac{1}{\eta_1} } \\
		&= \frac{ D}{2} + D \sum_{t=1}^{T} \frac{ \alpha_t^2 \| \tilde{g}_t - \tilde{M}_t \|_\ast^2}{\sqrt{1 + \sum_{i=1}^{t} \alpha_i^2 \| \tilde{g}_t - \tilde{M}_t \|_\ast^2 }} + \frac{3 D}{2} \sqrt{ 1 + \sum_{t=1}^{T} \alpha_t^2 \| \tilde{g}_t - \tilde{M}_t \|_\ast^2} - \sum_{t=1}^{T} \frac{\norm{x_t - y_{t-1}}^2}{2 \eta_{t+1}} \\
		&\leq  \frac{7 D}{2} \sqrt{ 1 + \sum_{t=1}^{T} \alpha_t^2 \| \tilde{g}_t - \tilde{M}_t \|_\ast^2 } - \frac{1}{2} \sum_{t=1}^{T} \frac{1}{\eta_{t+1}} \norm{x_t - y_{t-1}}^2 .
		\vspace{4mm}
	\end{align*}
	Now lets denote,
	
	$$
	B_t^2: = \min \{\|g_t-M_t\|_\ast^2, \| \tg_t -\tM_t\|_\ast^2\},
	$$
	as well as an auxiliary learning rate which we will only use for the analysis
	
	\vspace{-2mm}
	\begin{align} \label{prelim:learning-rate-imaginary}
		\teta_t = \frac{2 D}{\sqrt{ 1 + \sum\limits_{i=1}^{t-1} \alpha_i^2 B_i^2} }.
	\end{align}
	Clearly, for any $t\in[T]$ we have ${1}/{\teta_t} \leq {1}/{\eta_t}$, and therefore,
       
        \begin{align} \label{eq:SmoothStochastic1}
        		-\frac{1}{\eta_{t+1}} \norm{g_t - M_t}_\ast^2 &\leq  -\frac{1}{\teta_{t+1}}B_t^2.
        \end{align}
        Also, for $\xi_t = (\tilde{g}_t - \tilde{M}_t) - (g_t - M_t)$, we can write,
        
        \begin{align}\label{eq:Ineq33}
        \|\tg_t -\tM_t\|_\ast^2 \leq 2\|g_t-M_t\|_\ast^2  +2\|\xi_t\|_\ast^2.
        \end{align}
        Thus,
        \begin{align*}
        \|\tg_t -\tM_t\|_\ast^2 
        &= 
        B_t^2 + \left( \|\tg_t -\tM_t\|_\ast^2  -  \min \{\|g_t-M_t\|_\ast^2, \| \tg_t -\tM_t\|_\ast^2\} \right)\\
        & =
        B_t^2  + \max\{ 0,  \|\tg_t -\tM_t\|_\ast^2- \| g_t -M_t\|_\ast^2\} \\
        &\leq
        B_t^2 + B_t^2 + 2\|\xi_t\|_\ast^2 \\
        & = 
        2B_t^2 + 2\|\xi_t\|_\ast^2,
        \end{align*}
        where the last inequality is due to the fact that if $\|\tg_t -\tM_t\|_\ast^2\geq \| g_t -M_t\|_\ast^2$, then 
        $B_t^2: =\| g_t -M_t\|_\ast^2$. Then, we combine this with Eq.~\eqref{eq:Ineq33} to deduce that $\|\tg_t -\tM_t\|_\ast^2 - \|g_t -M_t\|_\ast^2 \leq B_t^2 + 2\|\xi_t\|_\ast^2$.
        
        We will take conditional expectation after we simplify the expression. Now, we plug Eq.~\eqref{eq:SmoothStochastic1} and \eqref{eq:Ineq33} into above bound,
        
	\begin{align*}
		&\leq \frac{7 D}{2} \sqrt{ 1 + 2 \sum_{t=1}^{T} \alpha_t^2 B_t^2 + \alpha_t^2 \norm{\xi_t}_\ast^2 } - \frac{1}{2} \sum_{t=1}^{T} \frac{1}{4 L^2 \teta_{t+1}} \alpha_t^2 B_t^2 \\
		&\leq \frac{7 D}{\sqrt{2}} \sqrt{ \sum_{t=1}^{T} \alpha_t^2 \norm{\xi_t}_\ast^2 } + \frac{7 D}{2} \sqrt{ 1 + 2 \sum_{t=1}^{T} \alpha_t^2 B_t^2 } - \frac{1}{2} \sum_{t=1}^{T} \frac{1}{4 L^2 \teta_{t+1}} \alpha_t^2 B_t^2\\
		&\leq \frac{7  D}{2} + \frac{7 D}{\sqrt{2}} \sqrt{ \sum_{t=1}^{T} \alpha_t^2 \norm{\xi_t}_\ast^2 } + 7 D\sum_{t=1}^{T} \frac{\alpha_t^2 B_t^2}{\sqrt{ 1 + 2 \sum_{i=1}^{t} \alpha_i^2 B_i^2 }} - \frac{1}{2} \sum_{t=1}^{T} \frac{1}{4 L^2 \teta_{t+1}} \alpha_t^2 B_t^2 \\
		&\leq  \frac{7  D}{2} + \frac{7 D}{\sqrt{2}} \sqrt{ \sum_{t=1}^{T} \alpha_t^2 \norm{\xi_t}_\ast^2 } + 7 D\sum_{t=1}^{T} \frac{\alpha_t^2 B_t^2}{\sqrt{ 1 + \sum_{i=1}^{t} \alpha_i^2 B_i^2 }} - \frac{1}{2} \sum_{t=1}^{T} \frac{1}{4 L^2 \teta_{t+1}} \alpha_t^2 B_t^2\\
		&\leq \underbrace{ \frac{7}{2} \sum_{t=1}^{T} \left( \teta_{t+1} - \frac{1}{28 L^2 \teta_{t+1}} \right) \alpha_t^2 B_t^2 + \frac{7 D}{2} }_\textrm{(A)} + \underbrace{ \frac{7 D}{\sqrt{2}} \sqrt{ \sum_{t=1}^{T} \alpha_t^2 \norm{\xi_t}_\ast^2 }  }_\textrm{(B)}.
	\end{align*}
	
	\paragraph{Bounding (A): } We will make use of the exact same approach as we did in Theorem~\ref{thm:rate-smooth-deterministic}, where we defined an auxiliary time variable $\tau^\ast$ to characterize the behavior of the learning rate.
	
	Now, let us denote $\tau^\ast = \max \bc{ t \in \bc{ 1, ..., T } \, : \, \frac{1}{\teta_{t+1}^2} \leq 56 L^2}$. It implies that
	
	\begin{align} \label{eq:tau-star}
		\teta_{t+1} - \frac{1}{28 L^2 \teta_{t+1}} \leq - \teta_{t+1}, \quad \forall t > \tau^\ast.
	\end{align}
	
	Then, we could proceed as
	
	\begin{align*}
		\text{(A)} &= \frac{7}{2} \sum_{t=1}^{\tau^\ast} \left( \teta_{t+1} - \frac{1}{28 L^2 \teta_{t+1}} \right) \alpha_t^2 B_t^2 + \frac{7}{2} \sum_{t=\tau^\ast + 1}^{T} \left( \teta_{t+1} - \frac{1}{28 L^2 \teta_{t+1}} \right) \alpha_t^2 B_t^2 + \frac{7  D}{2} \\
		&\leq \frac{7}{2} \sum_{t=1}^{\tau^\ast} \teta_{t+1} \alpha_t^2 B_t^2 - \frac{7}{2} \sum_{t=\tau^\ast + 1}^{T} \teta_{t+1} \alpha_t^2 B_t^2 + \frac{7  D}{2}\\
		&\leq \frac{7}{2} \sum_{t=1}^{\tau^\ast} \teta_{t+1} \alpha_t^2 B_t^2 + \frac{7  D}{2}\\
		&= 7 D \sum_{t=1}^{\tau^\ast} \frac{\alpha_t^2 B_t^2}{ \sqrt{ 1 + \sum_{i=1}^{t} \alpha_i^2 B_i^2 } } + \frac{7  D}{2}\\
		&\leq 14 D \sqrt{ 1 + \sum_{t=1}^{\tau^\ast} \alpha_t^2 B_t^2 } \\
		&\leq \frac{28 D^2}{\teta_{\tau^\ast+1}}  \\
		&\leq 112 \sqrt{14} D^2 L.
	\end{align*}

	\paragraph{Bounding (B): } Following bounded variance definition in Eq.~\eqref{def:bounded-variance}, we can write $\mathbb E [ \| \xi_t \|_\ast^2 ] \leq 4 \sigma^2$. After taking expected value conditioned on $\bar{x}_t$, we simply use Jensen's inequality to complete the proof
	
	\begin{align*}
		\mathbb E \left[ \frac{7 D}{\sqrt{2}} \sqrt{ \sum_{t=1}^{T} \alpha_t^2 \norm{\xi_t}_\ast^2 } \, \Bigg | \, \bar{x}_t \right] &\leq \frac{7 D}{\sqrt{2}} \sqrt{ \mathbb E \left[\sum_{t=1}^{T} \alpha_t^2 \norm{\xi_t}_\ast^2 \, \bigg| \, \bar{x}_t \right] } \\
		&= \frac{7 D}{\sqrt{2}} \sqrt{ \sum_{t=1}^{T} \alpha_t^2 \mathbb E \left[ \norm{\xi_t}_\ast^2 \, \big| \, \bar{x}_t \right] } \\
		&\leq \frac{7 D}{\sqrt{2}} \sqrt{ \sum_{t=1}^{T} 4 \alpha_t^2  \sigma^2 } \\
		&\leq \frac{14 D \sigma}{\sqrt{2}} \sqrt{ T^3 } \\
		&= \frac{14 \sigma D T^{3/2}}{\sqrt{2}}.
	\end{align*}
	
	Finally, we combine all these bounds together and feed them through Lemma~\ref{lem:regret-to-rate} to obtain the final rate.
	
	\begin{align*}
		\mathbb E \left[ f(\bar{x}_T) \right] - \min\limits_{x \in \mathcal K} f(x) &\leq \frac{224 \sqrt{14} D^2 L}{T^2} + \frac{14\sqrt{2} \sigma D }{\sqrt{ T}}.
	\end{align*}

\end{proof}

}
\fi

\end{document}